\definecolor{shadecolor}{rgb}{0.8,0.8,0.8}
\newtheorem{theorem}{Theorem}[section]
\newtheorem{problem}{Problem}[section]
\newtheorem{proof}{Proof}[section]
\newtheorem{definition}{Definition}
\newtheorem{remark}{Remark}
\newcommand{\wth}{\ensuremath{\widetilde w}_h}
\newcommand{\zth}{\ensuremath{\widetilde z}_h}
\newcommand{\xth}{\ensuremath{\widetilde x}_h}
\title{An adaptive Newton algorithm for optimal control problems
with application to optimal electrode design}
\author{Thomas Carraro$^{1}$\thanks{thomas.carraro@iwr.uni-heidelberg.de}, Simon D\"orsam$^{1}$, Stefan Frei$^{1}$ and Daniel Schwarz$^{2}$}
\affil{\small$^1$Institute for Applied Mathematics, Heidelberg University}
\affil{$^1$Interdisciplinary Center for Scientific Computing (IWR), Heidelberg University}
\affil{$^2$Behavioural Neurophysiology, Max Planck Institute for Medical Research}
\affil{$^2$Department of Neuroradiology, Heidelberg University Hospital}
\affil{$^2$Department of Anatomy and Cell Biology, Faculty of Medicine, Heidelberg University}
\affil{69120 Heidelberg, Germany}
\begin{document}
\maketitle

\begin{abstract}
In this work we present an adaptive Newton-type method to solve nonlinear constrained optimization problems in which the constraint 
is a system of partial differential equations discretized by the finite element method. 
The adaptive strategy is based on a goal-oriented a posteriori error estimation for the discretization and for the iteration error. 
The iteration error stems from an inexact solution of the nonlinear system of first order optimality conditions by the Newton-type method. 
This strategy allows to balance the two errors and to derive effective stopping criteria for the Newton-iterations. 
The algorithm proceeds with the search of the optimal point on coarse grids which are refined only if the discretization 
error becomes dominant. Using computable error indicators the mesh is refined locally leading to a highly efficient solution process.
The performance of the algorithm is shown with several examples and in particular with an application in the neurosciences: 
the optimal electrode design for the study of neuronal networks.
\end{abstract}

\section{Introduction}
In this work we consider the optimal design of a glass micro-electrode for the use of reversible \textit{in vivo} electroporation in neural tissue.
Electroporation describes the increase in permeability of the cell membrane by the application of an external electric 
field beyond a certain threshold~\cite{Tsong1991,Weberetal2010}. While this technique has been known at least since the 1960’s~\cite{HamiltonSale1967},
it has become a standard tool in the neurosciences in more recent years to 
load single cells and small ensembles of neurons with a range of dyes and molecules, for example for the visualization of
neural networks~\cite{HaasSin2001,Nagayamaetal2007,Nevianetal2007}, see Figure \ref{neuronal network} on the left.

In order to make the plasma membrane permeable for a specific dye, the local voltage has to exceed a certain threshold. On the other hand the applied 
stimulus can not be increased infinitely, as high peaks of current would cause collateral damage~\cite{Nevianetal2007,GabrielTeissie1995}.
A way to reduce such unwanted side-effects is to modify the shape of the micro-electrodes, in order to obtain a more uniform
distribution of the electric field. While standard 
electrodes have a single hole at the tip, adding more holes on the side of the pipette seems a promising approach. 
Recent work has shown that nanoengineering 
techniques are indeed available to shape glass micro-electrodes in the tip region using focused ion beam assisted milling~\cite{Langford2006,Schwarz:2016}, 
see Figure \ref{neuronal network} on the right.
It has been shown that the part of the neuronal network, that can be visualized with these modified pipettes, 
is considerably enlarged in comparison to the standard design \cite{Schwarz:2016}, see Figure \ref{pip} for a numerical demonstration.

The objective of this work is to design an optimal electrode in terms of position and size of holes in the micro-pipette by using methods of numerical optimization.
The scientific contribution of this work is twofold: (i) on one side we present a mathematical formulation of the optimal design of a micro-pipette; 
(ii) on the other side we present an adaptive Newton method for the solution of the corresponding optimization problem.

The model used to describe the electric field is a partial differential equation (PDE). Therefore, we deal with a PDE constrained optimization problem. 
In the context of PDE constrained optimization problems the two common solution methods are the reduced and the all-at-once 
approach~\cite{HinzePinnauUlbrichUlbrich:2009}. 
We will adopt the latter one in which the optimality conditions are expressed as \textit{Karush-Kuhn-Tucker} (KKT) system.
There is a large literature on this topic and we refer for example to the books \cite{Fursikov:2000,HinzePinnauUlbrichUlbrich:2009,Luenberger:1969} for a thorough introduction.
Regarding the specific application there are no systematic studies that use a model based approach to design the micro-pipette used in electroporation. Therefore, 
the results shown here are of scientific interest even if they are obtained in a simplified setting with a two-dimensional problem. 
The extension to three-dimensional problems with a more complex model is possible within the same adaptive algorithm.

\begin{figure}[t]
\centering
 \includegraphics[height=0.25\textheight]{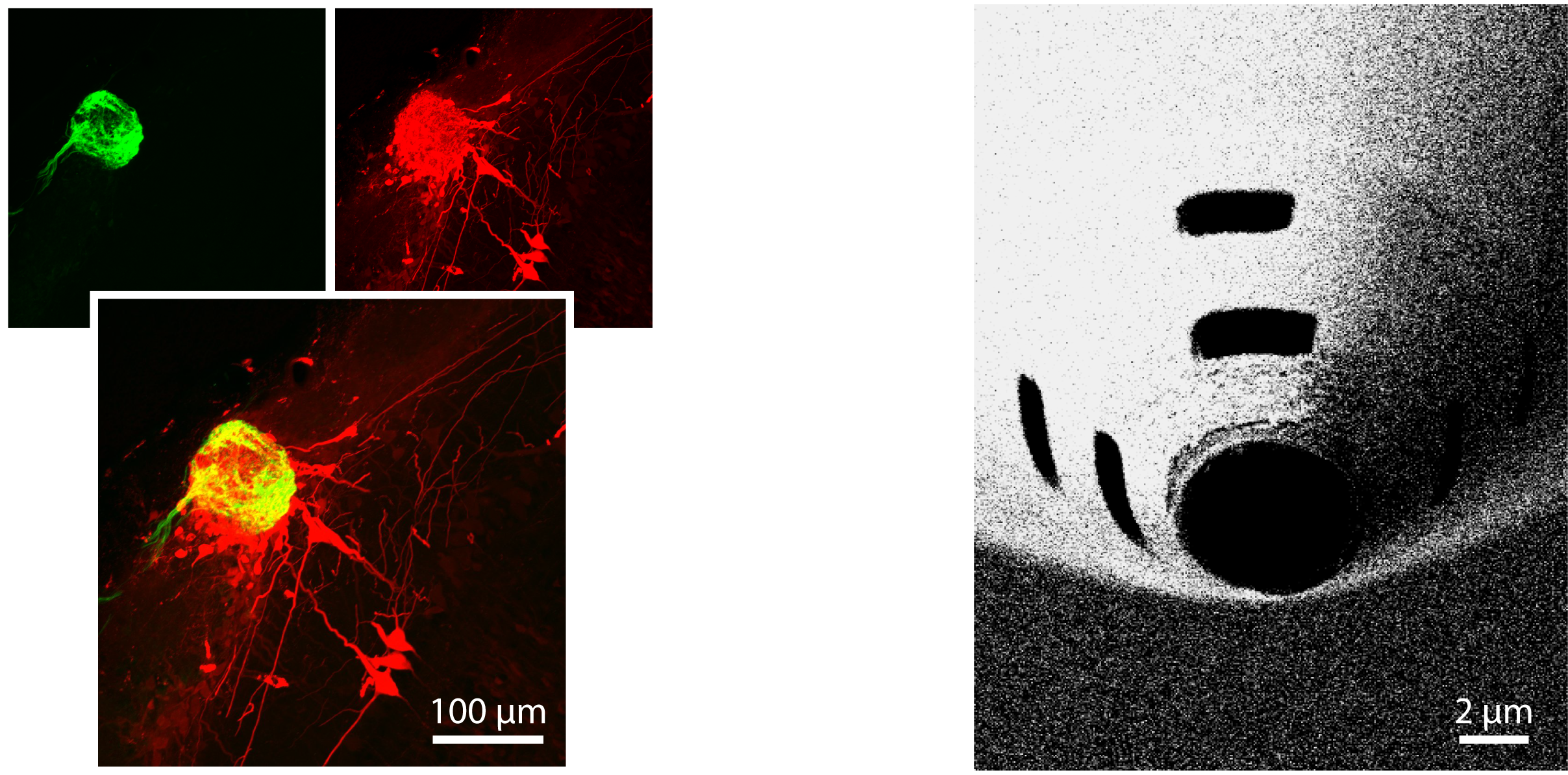}
 \caption{\textit{Left:} Example of a genetically-tagged olfactory glomerulus in the mouse as an example of a medium-sized neural 
  circuit in the brain (green, upper panel left). Upper panel right: Typical result after targeted electroporation of a 
  tetramethyl-rhodamine-dextran dye (red) revealing various types of 
  directly affiliated neurons and their processes in the surrounding region. Lower panel showing an overlay of the two fluorescent channels.
  \textit{Right:} Example of a modified glass micro-electrode after inserting several additional openings around the
  tip region by focused ion beam assisted milling.
  }
 \label{neuronal network}
\end{figure}

Mesh adaptivity is in many aspects well established in the context of finite element discretization of linear and nonlinear partial differential equations, 
see e.g.\ \cite{Babuska:2011,Verfuerth:2000}. Furthermore, goal oriented a posteriori error estimation has been successfully used in many applications, 
see the seminal works \cite{BeckerRannacher:2001,BangerthRannacher:2003} for an overview of the Dual Weighed Residual (DWR) technique and exemplarily
\cite{CarraroGoll:2017,Richter:2012,VexlerWollner:2008,BraackErn:2003} for some specific applications. 
A posteriori error estimation methods have been used to control the discretization error either in global norms, e.g.\ the $L^2$ or energy norm, 
or in specific functionals in the context of goal oriented techniques. 

To solve the nonlinear system arising from the discretization of the underlying problem typically a Newton-type method is used.  
If the Newton iteration is stopped after reaching a given tolerance, there is an iteration error that has to be taken into account in addition to the discretization error.
In particular, it is advantageous to control the iteration error and allow the Newton-iterates to stop before full convergence (i.e.$\,$to machine precision), because each Newton-iteration comes at the cost of the solution of a large linear system. 
The latter might be badly conditioned, especially in the context of multi-physics and optimization problems, 
leading to a large number of iterations 
of an iterative linear solver.
There are only few results on a posteriori error estimation that combine an estimation of
the discretization error and of the iteration error, resulting in algorithms that have stopping criteria based on balancing the two sources of error.

In the last few years increasing attention has been given
to adaptive strategies to solve nonlinear problems
including those arising from discretizations of partial differential equations.
Ziems and Ulbrich have presented in \cite{Ulbrich:2011} a class of inexact multilevel
trust-region sequential quadratic programming (SQP) methods for the solution of nonlinear 
PDE-constrained optimization problems, in which the discretization error in global norms
is controlled by local error estimators including control of the inexactness of the iterative solvers. 
Further works can be found outside the optimization context. A list of relevant publications is here given:

Bernardi and coauthors have shown an a posteriori analysis of iterative algorithms for nonlinear problems \cite{Bernardi:2015}, Rannacher and Vihharev have 
balanced the discretization error and the iteration error in a Newton-type solver \cite{RannacherVihharev:2013}; Ern and Vohral\'ik have developed an adaptive strategy for inexact 
Newton methods based on a posteriori error analysis \cite{Ern:2013} and Wihler and Amrein have presented an adaptive Newton-Galerkin method for semi-linear elliptic PDEs which combines an 
error estimation for the Newton step and an error estimation for the discretization with finite elements \cite{Wihler:2015}.

Since the goal of a simulation is the computation of a specific quantity of interest, for example in our case the optimal micro-pipette design (i.e.\ the position and dimension of the side holes), 
it is desirable to optimize the mesh refinement in a goal-oriented fashion. Furthermore, also the stopping criterion for the Newton iteration should be goal-oriented. This allows, for example in the context of optimization, 
to approximate the optimal point on coarse meshes and refine only once the discretization error becomes dominant.
In this way we reach the full balance of error sources with respect to the quantity of interest and the algorithm does the costly iterates (on fine meshes) only after the nonlinearities have 
been adequately solved on cheaper meshes. Consequently the computational costs are reduced by keeping the precision of the simulation at the desired level.
The new contribution of our work in this context is the derivation of a goal-oriented strategy for the adaptive control of a Newton-type algorithm to solve a nonlinear PDE-constrained optimization problem.

This work is organized as follows. In Section \ref{sec:Optimization problem} we formulate the general optimization problem; in Section \ref{sec.estimator} we present our adaptive strategy; 
in Section \ref{sec.application} we introduce the application in optimal electrode design; in Section \ref{sec.algo} we delineate the algorithms 
and in Section \ref{sec.num} we present some 
numerical results. Finally, in Section \ref{sec.conclusion}, an outlook to possible extensions of the presented method is given.

\begin{figure}[t]
\centering
 \includegraphics[width=5cm]{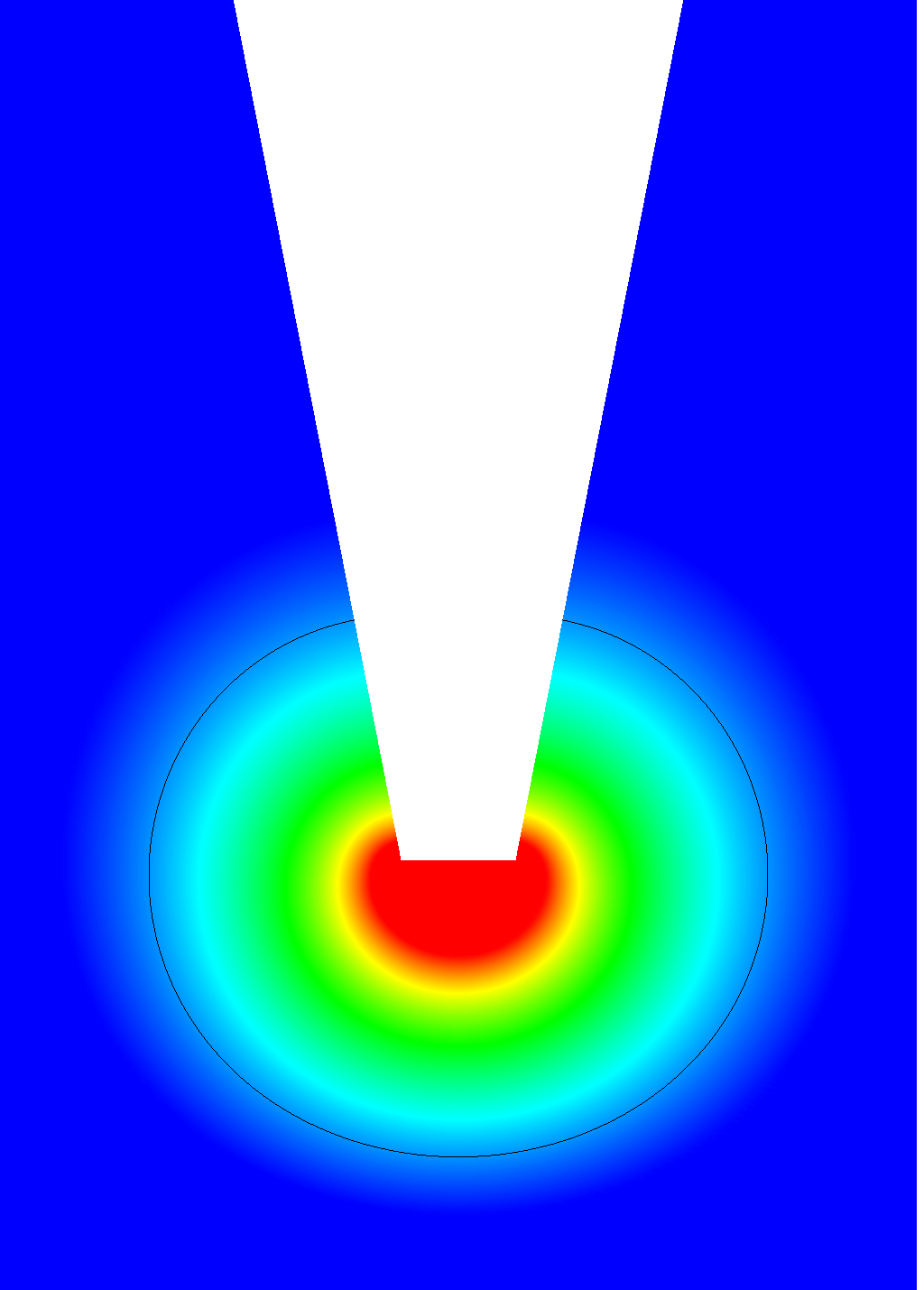}
 \includegraphics[width=5cm]{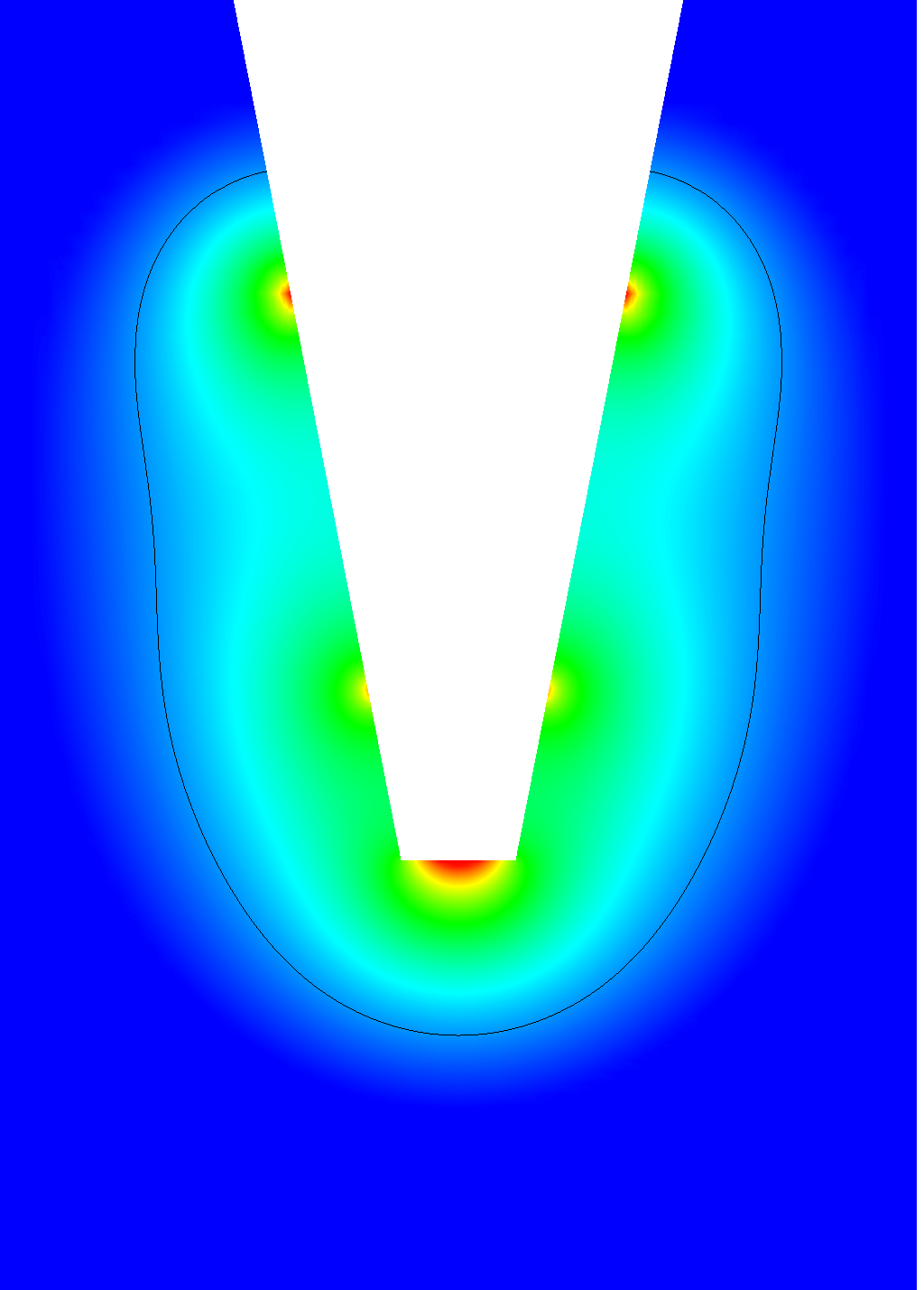}
 \caption{\label{pip}Two numerical results for the comparison of the activated region for a standard micro-pipette with one 
 hole only (left) and a modified micro-pipette with two additional set of holes (right). The black contour line
 illustrates the region, where a certain threshold is exceeded.}
 \end{figure}
\section{Optimization problem}
\label{sec:Optimization problem}
\noindent We consider the following optimization problem with parameters $q\in \mathbb{R}^s, s\in \mathbb{N}$
\begin{align}
\min_{q\in\mathbb{R}^s, u\in {\cal V}} J(u,q)& \label{opt}\\
s.t.\; A(u,q;\varphi) &=(f(q),\varphi) \quad \forall\varphi \in {\cal V}.\label{stateeq}
\end{align}
We assume that ${\cal V}$ is a reflexive Banach space. Let $A:{\cal V}\times \mathbb{R}^s\times {\cal V} \to \mathbb{R}$ be a semi-linear form and $f(q) \in {\cal V}^*$ for every $q\in\mathbb{R}^s$, where $\cal{V}^*$ denotes the dual space of $\cal V$.
Furthermore, we assume that $J$ and $A$ are twice (Fr\'echet) differentiable and that for each  $q\in \mathbb{R}^s$ the state equation (\ref{stateeq}) has a unique solution $u$. Let
us denote the (nonlinear) control-to-state map by $S:\mathbb{R}^s\to {\cal V}$.

Under these assumptions we can consider a reduced formulation of the optimization problem, with a reduced objective functional $j(q) := J(q,S(q)) : \mathbb R^s\rightarrow \mathbb R$. 
If the reduced objective functional is coercive the existence of local minimizers to (\ref{opt})-(\ref{stateeq}) follows by
standard arguments, see e.g.~\cite{Fursikov:2000,HinzePinnauUlbrichUlbrich:2009}.
The coercivity assumption is needed in case of unconstrained optimization problems to assure boundedness of the minimizing sequence. Therefore, for the practical solution of the problem, 
we consider a Tikhonov regularization term in the objective functional.
If in addition the functional is convex, the optimization problem has a unique solution.
Since in this work we allow nonlinearities in the model, we cannot assume convexity of the reduced functional. Therefore, the theoretical results assure only the existence of local minimizers.

\noindent To derive the optimality conditions, we introduce the Lagrange functional
\begin{align} 
  L \colon \mathcal{V} \times \mathbb{R}^s \times \mathcal{V} \to \mathbb{R}, 
  \quad L(u, q, \lambda) = J(u,q) + A(u,q; \lambda).
\end{align}
The first-order necessary optimality conditions are given by the KKT system
\begin{alignat}{2}
\label{KKT system}
\nonumber L'_{u}(u,q,\lambda)(\delta u)&=J'_u(u,q)(\delta u) + A'_u(u,q;\lambda)(\delta u)  =0 &&\quad\forall \delta u\in \mathcal{V},\\
L'_{q}(u,q,\lambda)(\delta q)&=J'_q(u,q)(\delta q) + A'_q(u,q;\lambda)(\delta q)  =0 &&\quad\forall \delta q  \in  \mathbb{R}^s,\\
\nonumber L'_{\lambda}(u,q,\lambda)(\delta \lambda)&= A(u,q;\delta \lambda)  =0 &&\quad\forall \delta \lambda\in \mathcal{V}.
\end{alignat} 

The first equation corresponds to the dual equation for the adjoint variable $\lambda$, the second equation
is called the control equation and the third equation
is the state equation for the primal variable $u$.

\subsection{Model problem}
To simplify the notation in the introduction of the error estimator in the next section, we consider a model problem of the form
\begin{align*}
\min_{q\in\mathbb{R}^s, u\in {\cal V}} J(u,q)&=\frac{1}{2} \int_{\Omega^s} \left(u-\hat{u}\right)^2 dx + \frac{\alpha}{2}|q|^2\\
s.t.\; \; \sigma (\nabla u,\nabla \varphi)_\Omega &=(f(q),\varphi)_\Omega \quad \forall\varphi \in {\cal V}
\end{align*}
where ${\cal V} := H^1_0(\Omega)$, $\Omega\subset \mathbb{R}^2$, $\alpha$ and $\sigma$ are positive real numbers, $(\cdot,\cdot)_\Omega$ 
denotes the $L^2$ scalar product and $\Omega^s \subset \Omega$.
The corresponding KKT system reads
\begin{problem}[KKT system of the model problem]
\label{KKT problem}
Find $w:=(u,q,\lambda) \in \mathcal{V}\times\mathbb{R}^s\times\mathcal{V}$ such that
\begin{alignat*}{2}
L'_{u}(w)(\delta u)&=(\delta u,u-\hat{u})_{\Omega^s} + \sigma ( \nabla \delta u, \nabla \lambda)_{\Omega}
=0  &&\quad \forall \delta u \in \mathcal{V}
,\\
L'_{q}(w)(\delta q)&=\alpha(\delta q,q)-(f'(q)(\delta q), \lambda)_{\Omega}
=0  &&\quad\forall \delta q \in  \mathbb{R}^s
,\\
L'_{\lambda}(w)(\delta \lambda)&= \sigma ( \nabla u, \nabla \delta \lambda)_{\Omega}
-(f(q),\delta \lambda)_{\Omega} =0 &&\quad\forall \delta \lambda \in \mathcal{V}
.
\end{alignat*}
\end{problem}
By introducing the semi-linear form
\begin{align}
\label{form A}
\begin{split}
\mathcal{A}(w;\delta w):=&(\delta u,u-\hat{u})_{\Omega^s} + \sigma ( \nabla \delta u, \nabla \lambda)_{\Omega}\\ 
&+ \alpha(\delta q,q)-(f'(q)(\delta q), \lambda)_{\Omega}\\
&+ \sigma ( \nabla u, \nabla \delta \lambda)_{\Omega}
-(f(q),\delta \lambda)_{\Omega}
\end{split}
\end{align}
we can write the KKT system in compact form as
\begin{align}
\label{compact KKT}
\mathcal{A}(w;\delta w) = 0 \quad \forall \delta w \text{ in } \mathcal{V}\times\mathbb{R}^s\times\mathcal{V}.
\end{align}

The derivation of a corresponding adaptive Newton method 
for other functionals $J$ and semi-linear forms $A$ fulfilling the assumptions made above is straight-forward
given that the KKT system is solvable with a Newton-type solver.
The modification of the optimization problem to the specific application presented in this paper
will be made later in Section \ref{sec.application}.

\subsection{Discretization}
We choose conforming finite element spaces ${\cal V}_h \subset {\cal V}$
for the state variable $u_h$ and the dual variable $\lambda_h$. The control space $\mathbb{R}^s$ is already 
finite dimensional, therefore we do not need a discretization of the control variable.
The discrete optimality system reads
\begin{problem}[Discrete KKT system of the model problem]
\label{discrete compact KKT system}
Find $u_h \in \mathcal{V}_h$, $q_h \in \mathbb{R}^s$ and $\lambda_h \in \mathcal{V}_h$, such that
\begin{align}
\label{compact discrete KKT}
\mathcal{A}(w_h;\delta w) = 0 \quad \forall \delta w \text{ in } \mathcal{V}_h\times\mathbb{R}^s\times\mathcal{V}_h.
\end{align}
\end{problem}
An essential problem in solving a discretized PDE system is the choice of the computational mesh on which depends the discretization error, i.e.\ the error due to the finite dimensional approximation given by the finite elements.

\section{Adaptive strategy}
\label{sec.estimator}
In the case of optimization problems it is of interest to control the accuracy of the solution of the first-order 
optimality conditions. The accuracy depends on the discretization error and it ``measures'' the quality of the approximation 
of the optimal point, i.e.\ of the optimal control and optimal state.
In the context of PDE constrained optimization problems, the 
two typical methods to solve the problem are the reduced approach and the all-at-once approach.
Here we use the \textit{all-at-once} approach, in which the optimality conditions are expressed
in terms of the gradient of the Lagrangian functional $L$ defined in the previous section. In particular, in absence of control and/or state constraints the optimality conditions are given by
\[
\nabla L(w)(\delta w) \overset{!}{=} 0 \quad \forall \delta w \text{ in } \mathcal{V}\times \mathbb{R}^s\times\mathcal{V},
\]
and the discrete counterpart is
\[
\nabla L(w_h)(\delta w) \overset{!}{=} 0 \quad \forall \delta w \text{ in } \mathcal{V}_h\times\mathbb{R}^s\times\mathcal{V}_h .
\]
Since the discrete approximation $(u_h, q_h, \lambda_h)$ is accurate only up to a certain tolerance 
that depends on the actual mesh refinement, it makes sense for efficiency reasons to
solve the optimality system only up to a certain accuracy as well.

The idea of our adaptive inexact Newton-type method is to balance the accuracy of the first order optimality conditions, i.e.\ of the KKT system, 
with the accuracy of its discrete approximation with respect to a goal functional, rather than with respect to some (global) norms of the solution or of the residuals. This is possible exploiting the flexibility 
of the DWR which allows to control the error with respect to an arbitrary functional.

In Section \ref{intro DWR}, we briefly introduce the DWR method and in Section \ref{DWR balance} we explain 
how to split the error into two contributions: one from the mesh discretization and the other from the inexact solution of the KKT system.

\subsection{Dual weighted residual (DWR) method}
\label{intro DWR}
We are interested in estimating the error $e(u,q,\lambda)$ measured in a quantity of interest: $$e(u,q,\lambda) := \mathcal{I}(u,q,\lambda) - \mathcal{I}(u_h,q_h,\lambda_h).$$ 
Following the seminal work of Becker and Rannacher \cite{BeckerRannacher:2001} we obtain the error identity by weighting the residual of the KKT system by an appropriate dual problem.
Let $w=(u,q,\lambda)$ be the solution of the KKT system \eqref{KKT system}. For the DWR error representation
we need the residual of the system, $\rho(w_h)(\cdot): \mathcal{V}\times\mathbb{R}^s\times\mathcal{V} \rightarrow \mathbb  R$, defined by
\begin{align}
\label{rho}
\rho(w_h)(\varphi) := 
\nonumber & \; (\varphi^u,u_h-\hat{u})_{\Omega^s} + \sigma ( \nabla \varphi^u, \nabla \lambda_h)_{\Omega}\\
& + \alpha(\varphi^q,q_h)-(f'(q_h)(\varphi^q), \lambda_h)_{\Omega}\\
\nonumber & + \sigma ( \nabla u_h, \nabla \varphi^\lambda)_{\Omega} -(f(q_h),\varphi^\lambda)_{\Omega}
\end{align}
with $\varphi=(\varphi^u,\varphi^q,\varphi^{\lambda})\in \mathcal{V}\times\mathbb{R}^s\times\mathcal{V}$.
Furthermore, we need the following adjoint problem to define the error estimator
\begin{problem}[Dual problem]
\label{dual problem}
Find $z:=(z^u,z^q,z^{\lambda}) \in \mathcal{V}\times\mathbb{R}^s\times\mathcal{V}$ such that
\begin{alignat*}{2}
 (z^u,\delta u)_{\Omega^s}+  \sigma ( \nabla z^{\lambda},  \nabla \delta u)_{\Omega} &=-\mathcal{I}'_u(u,q,\lambda)(\delta u) \qquad&& \forall \delta u \in \mathcal{V},\\ 
\alpha(z^q,\delta q)_{\Omega} -(z^{\lambda},f'(q)(\delta q))_{\Omega} -(\lambda, f^{\prime\prime}(q)(\delta q))_\Omega&=-\mathcal{I}'_q(u,q,\lambda)(\delta q)\qquad&& \forall \delta q \in \mathbb{R}^s,\\
 \sigma ( \nabla z^u, \nabla \delta \lambda)_{\Omega}- (\delta \lambda, f'(q)(z^q))_{\Omega}&=-\mathcal{I}'_{\lambda}(u,q,\lambda)(\delta \lambda)\qquad&& \forall \delta \lambda \in \mathcal{V}.
\end{alignat*}
By setting $\delta w = (\delta u,\delta q, \delta \lambda)$, the dual system reads 
 \begin{align}
 \label{dual KKT}
  \mathcal{A}^* (z,w)(\delta w) = -\mathcal{I}'_w(w)(\delta w) \quad \forall \delta w \in \mathcal{V} \times \mathbb{R}^s \times \mathcal{V} 
 \end{align}
with the adjoint bilinear form $\mathcal{A}^*(\cdot,\cdot)(\cdot): \bigl(\mathcal{V}\times\mathbb{R}^s\times\mathcal{V}\bigr)^3\rightarrow \mathbb R$ 
defined as
\begin{align*}
  \mathcal{A}^*(z,w)(\delta w) &:= (z^u,\delta u)_{\Omega^s}+  \sigma ( \nabla z^{\lambda},  \nabla \delta u)_{\Omega} 
    +\alpha(z^q,\delta q)_{\Omega} -(f'(q)(\delta q),z^{\lambda})_{\Omega}\\
    &\quad-(\lambda, f^{\prime\prime}(q)(\delta q))_\Omega+\sigma ( \nabla z^u, \nabla \delta \lambda)_{\Omega}- (f'(q)(z^q), \delta \lambda)_{\Omega}.
\end{align*}
\end{problem}
Its discretized counterpart is
\begin{problem}[Discretized dual problem]
\label{discrete dual problem}
Find $z_h:=(z^u_h,z^q_h,z^{\lambda}_h) \in \mathcal{V}_h\times\mathbb{R}^s\times\mathcal{V}_h$ such that
\begin{align}
\label{compact discrete dual KKT}
\mathcal{A}^* (z_h,w_h)(\delta w) = -\mathcal{I}'_w(w_h)(\delta w) \quad \forall \delta w \in \mathcal{V}_h \times \mathbb{R}^s \times \mathcal{V}_h.
\end{align}
\end{problem}
Since the model problem is nonlinear in $q$ we need to define the following dual residual $\rho^*(w_h, z_h)(\cdot): \mathcal{V}\times\mathbb{R}^s\times\mathcal{V}\rightarrow \mathbb R$
to derive the error estimator
\begin{align}
\label{rho*}
\rho^*(w_h, z_h)(\psi) := &
\nonumber \bigl(z^u_h,\psi^u\bigr)_{\Omega^s}+  \sigma \bigl( \nabla z^{\lambda}_h,  \nabla \psi^u\bigr)_{\Omega} + \mathcal{I}'_u(w_h)(\psi^u) \\
& + \alpha\bigl(z^q_h,\psi^q\bigr)_{\Omega} -\bigl(z^{\lambda}_h,f'(q_h)(\psi^q)\bigr)_{\Omega} -\bigl(\lambda_h, f^{\prime\prime}(q_h)(\psi^q)\bigr)_\Omega+ \mathcal{I}'_q(w_h)(\psi^q) \\
\nonumber & + \sigma \bigl( \nabla z^u_h, \nabla \psi^\lambda\bigr)_{\Omega}- \bigl(f'(q_h)(z^q), \psi^\lambda\bigr)_{\Omega} + \mathcal{I}'_{\lambda}(w_h)(\psi^\lambda),
\end{align}
with $\psi = (\psi^u, \psi^q, \psi^\lambda) \in \mathcal{V}\times\mathbb{R}^s\times\mathcal{V}$.

With these definitions, following \cite[Proposition 6.2]{BangerthRannacher:2003}, we get the error estimator

\begin{theorem}[A posteriori error estimator]
\label{dwr thm}
Let $w$, $w_h$ be the solutions of Problem \ref{KKT problem} and \ref{discrete compact KKT system} and let $z$, $z_h$ be the solutions of the continuous dual problem \ref{dual problem} 
and its discretized version \ref{discrete dual problem}. It holds the error identity
\begin{align}
\label{error identity}
\mathcal{I}(w) - \mathcal{I}(w_h) = \frac{1}{2} \rho(w_h)(z - z_h) + \frac{1}{2} \rho^*(w_h, z_h)(w - w_h) + R
\end{align}
with the residual $\rho(w_h)(\cdot)$ and the adjoint residual $\rho^*(w_h, z_h)(\cdot)$ defined in \eqref{rho} and \eqref{rho*}. The remainder term is given by
\begin{align}
\label{remainder}
R = \frac{1}{2} \int_0^1\bigl\{ \mathcal{I}^{\prime\prime\prime}(w_h+se)(e,e,e) - \mathcal{A}^{\prime\prime\prime}(w_h+ se;z_h+se^*)(e,e,e) - 3 \mathcal{A}^{\prime\prime}(w_h+se;e^*)(e,e)\bigr\}s(s-1)\mathrm{d}s
\end{align}
where $\mathcal{A}$ is the semi-linear form \eqref{form A} and the primal and dual errors are $e:=w - w_h$ and $e^*:=z - z_h$.
\end{theorem}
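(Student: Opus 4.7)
The plan is to recast the KKT system together with its adjoint as the Euler--Lagrange equations of a single extended Lagrangian on the product space $X := (\mathcal V\times\mathbb R^s\times\mathcal V)^2$ and then apply the trapezoidal quadrature rule to the fundamental theorem of calculus, in the spirit of \cite[Prop.~6.2]{BangerthRannacher:2003}. Concretely, I would introduce
\[
\widehat{\mathcal L}(w,z) := \mathcal I(w) + \mathcal A(w;z).
\]
Since $\mathcal A(w;\cdot)$ is linear in its second argument, stationarity of $\widehat{\mathcal L}$ in $z$ yields $\mathcal A(w;\delta z)=0$, i.e.\ the primal KKT system \eqref{compact KKT}; and a term-by-term comparison of $\partial_w\mathcal A(w;z)(\delta w)$ with the bilinear form $\mathcal A^*(z,w)(\cdot)$ shows $\mathcal A'_w(w;z)=\mathcal A^*(z,w)$, so stationarity in $w$ is exactly Problem \ref{dual problem}. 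Hence $(w,z)$ and $(w_h,z_h)$ are continuous and discrete critical points of $\widehat{\mathcal L}$.

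Because $\mathcal A(w;z)=0$ on the primal solution and $\mathcal A(w_h;z_h)=0$ by discrete Galerkin orthogonality (take $\delta z=z_h$ in Problem \ref{discrete compact KKT system}), one has $\widehat{\mathcal L}(w,z)=\mathcal I(w)$ and $\widehat{\mathcal L}(w_h,z_h)=\mathcal I(w_h)$. Setting $x:=(w,z)$, $x_h:=(w_h,z_h)$, $e_x:=x-x_h=(e,e^*)$ and $g(s):=\widehat{\mathcal L}'(x_h+se_x)(e_x)$, the fundamental theorem of calculus combined with the Peano form of the trapezoidal rule gives
\[
\widehat{\mathcal L}(x)-\widehat{\mathcal L}(x_h)=\int_0^1 g(s)\,ds = \tfrac12\bigl[g(0)+g(1)\bigr] + \tfrac12\int_0^1 g''(s)\,s(s-1)\,ds.
\]
Since $g(1)=\widehat{\mathcal L}'(x)(e_x)=0$ by continuous stationarity, and $g(0)=\widehat{\mathcal L}'(x_h)(e_x)$ splits additively into the $z$-slot contribution $\mathcal A(w_h;e^*)=\rho(w_h)(z-z_h)$ (via \eqref{rho}) and the $w$-slot contribution $\mathcal I'_w(w_h)(e)+\mathcal A'_w(w_h;z_h)(e)=\rho^*(w_h,z_h)(w-w_h)$ (via \eqref{rho*}), the two leading terms of \eqref{error identity} appear and the remaining integral becomes $R$.

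For the expansion of $R$ in the form \eqref{remainder}, the key observation is that $\mathcal A(w;z)$ is linear in $z$: every triple derivative of $\widehat{\mathcal L}$ involving two or more $z$-directions vanishes, so that $\widehat{\mathcal L}'''(x_h+se_x)(e_x,e_x,e_x)$ reduces to the pure $w$-part $\mathcal I'''(w_h+se)(e,e,e)$ plus $\mathcal A'''(w_h+se;z_h+se^*)(e,e,e)$, together with a mixed part in which exactly one of the three slots carries a $z$-direction; the three equivalent placements of that single $z$-slot produce the combinatorial factor $3$ in front of $\mathcal A''(w_h+se;e^*)(e,e)$, matching \eqref{remainder}. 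The main obstacle, as I see it, is the careful termwise verification that $\mathcal A'_w(w;z)$ derived from \eqref{form A} coincides with the condensed $\mathcal A^*(z,w)$ stated after Problem \ref{discrete dual problem}, correctly handling the $f''(q)$-coupling of $\delta q$ and $z^q$ and the symmetry of the $L^2$-pairings, together with keeping track of signs so that \eqref{remainder} appears with the paper's sign convention; the rest is standard DWR bookkeeping and requires no regularity beyond the standing assumption that $\mathcal I$ and $\mathcal A$ are three times Fr\'echet differentiable.
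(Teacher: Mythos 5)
Your proposal follows the same overall strategy as the paper: pack the primal KKT system and its adjoint into a single extended Lagrangian on the product space, use that the continuous and discrete solutions are critical points, and apply the Peano-kernel form of the trapezoidal rule to $s\mapsto\widehat{\mathcal L}(x_h+se_x)$. The identification $\mathcal A'_w(w;z)=\mathcal A^*(z,w)$, the Galerkin-orthogonality step $\mathcal A(w_h;z_h)=0$, and the splitting of $g(0)$ into $\rho(w_h)(e^*)+\rho^*(w_h,z_h)(e)$ are all the same ingredients the paper uses.

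There is, however, a genuine sign discrepancy that you flag as an ``obstacle'' but then implicitly claim to resolve. You set $\widehat{\mathcal L}(w,z)=\mathcal I(w)+\mathcal A(w;z)$, while the paper's proof introduces $\mathcal L=\mathcal I(w)-\mathcal A(w;z)$ (i.e.\ $\mathcal L=\mathcal I-L'_u(z^u)-L'_q(z^q)-L'_\lambda(z^\lambda)$). With your plus sign, the third derivative expansion unavoidably gives
$\mathcal I'''(e,e,e)+\mathcal A'''(\cdot;\cdot)(e,e,e)+3\,\mathcal A''(\cdot;e^*)(e,e)$,
whereas \eqref{remainder} has minus signs on both $\mathcal A$-terms. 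So the final sentence ``matching \eqref{remainder}'' does not hold: you would obtain the error identity \eqref{error identity} but with a remainder whose $\mathcal A$-terms have the opposite sign. It is worth noting that the paper itself is not internally sign-consistent here: your $+$ Lagrangian is the one whose $w$-stationarity reproduces Problem~\ref{dual problem} as written ($\mathcal A^*(z,w)=-\mathcal I'_w$), and the one for which $g(0)=\rho(w_h)(e^*)+\rho^*(w_h,z_h)(e)$ holds with the paper's definitions of $\rho$ and $\rho^*$; the paper's $-$ Lagrangian instead matches \eqref{remainder} and the ``perturbed dual equation'' in Section~\ref{DWR balance} (which has $+\mathcal I'_w$), but not Problem~\ref{dual problem}. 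Regardless of how one resolves the paper's conventions, your derivation as written does not produce the remainder \eqref{remainder}: you either need to switch to $\mathcal L=\mathcal I-\mathcal A$ (and then re-derive Problem~\ref{dual problem} with a $+\mathcal I'_w$ right-hand side, and re-check $g(0)$), or you keep $\widehat{\mathcal L}=\mathcal I+\mathcal A$ and state the remainder with the flipped signs. You should make this explicit rather than assert agreement.
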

\begin{proof}
The proof follows by application of Proposition 6.1 from \cite{BangerthRannacher:2003} with the following Lagrange functional
\begin{align*}
 \mathcal{L}(u, q, \lambda,  z^u, z^q, z^{\lambda}) = \mathcal{I}(u, q, \lambda) - L'_{u}(u, q, \lambda)(z^u)
 - L'_{\lambda}(u, q, \lambda)(z^{\lambda}) - L'_{q}(u, q, \lambda)(z^q).
\end{align*} 
We sketch it here for later purposes.
Introducing the notation $x=(w,z)$ and $x_h=(w_h, z_h)$ and reminding the definition of the semi-linear form $\mathcal{A}$, see expression \eqref{form A}, we can rewrite it as
\begin{align*}
\mathcal{L}(x, z) := \mathcal{I}(x) - \mathcal{A}(x; z)
\end{align*} 
Furthermore, it is
\begin{align*}
\mathcal{I}(w) - \mathcal{I}(w_h) = \mathcal{L}(x) + \mathcal{A}(w; z) - \mathcal{L}(x_h) - \mathcal{A}(w_h; z_h) 
= \mathcal{L}(x) - \mathcal{L}(x_h),
\end{align*}
where we have used the fact that $w$ and $w_h$ satisfy \eqref{compact KKT} and \eqref{compact discrete KKT} respectively.
Considering the relation
\begin{align*}
\mathcal{L}(x) - \mathcal{L}(x_h) = \int_0^1\mathcal{L}^\prime(x + s(x-x_h))(e) \mathrm{d} s,
\end{align*}
the error identity follows from the error representation of the trapezoidal rule
\begin{align*}
\int_0^1 f(s)\mathrm{d}s = \frac{1}{2}\big( f(0) + f(1) \big) + \frac{1}{2}\int_0^1 f^{\prime\prime}(s)s(s-1)\mathrm{d}s.
\end{align*}
In fact, since $\mathcal{L}^\prime(x)(e)=0$ it is
\begin{align*}
\mathcal{I}(w) - \mathcal{I}(w_h) = \mathcal{L}(x) - \mathcal{L}(x_h) = \frac{1}{2} \mathcal{L}^\prime(x_h)(x - x_h) + R,
\end{align*}
where $R$ is the remainder term of the trapezoidal rule.
From this relation, using the definitions \eqref{rho} and \eqref{rho*}, the identity \eqref{error identity} can be deduced observing that
\begin{align*}
\mathcal{L}^\prime(x_h)(\cdot) = \mathcal{I}^\prime(x_h)(\cdot) - \mathcal{A}^\prime(x_h; z_h)(\cdot) - \mathcal{A}(x_h;\cdot).
\end{align*}
\end{proof}

\subsection{Balancing of discretization and iteration error}
\label{DWR balance}
In this work, we consider an inexact Newton-type method to solve the nonlinear KKT system \eqref{discrete compact KKT system}. 
We introduce the notation $\wth$ to indicate the inexact solution of the KKT system, which is obtained when the stopping criterion
\begin{align}
\label{inexact FE projection}
\frac{|\mathcal{A}(\wth;\delta w)|}{\|\delta w\|} \leq TOL \quad \forall \delta w \text{ in } \mathcal{V}_h\times\mathbb{R}^s\times\mathcal{V}_h
\end{align}
is reached and the notation $\zth$ to indicate the ``perturbed'' dual solution 
obtained by solving exactly (up to machine precision) the ``perturbed dual equation''
\begin{align*}
\mathcal{A}^* (\zth,\wth)(\delta w) = \mathcal{I}'_w(\wth)(\delta w) \quad \forall \delta w \in \mathcal{V}_h \times \mathbb{R}^s \times \mathcal{V}_h.
\end{align*}
We use the term ``perturbed dual equation'' for the adjoint equation in which we set the inexact primal solution $\wth$ as coefficient.

Since $\wth$ and $\zth$ are approximations of $w_h$ and $z_h$, an additional term appears
in the error identity \eqref{error identity} that accounts for the inexact Galerkin projection \eqref{inexact FE projection}.

Following \cite[Proposition 3.1]{RannacherVihharev:2013} we have the error estimator
\begin{theorem}[Error estimator with inexact Galerkin projection]
\begin{align}
\label{error identity with inexact Galerkin projection}
\mathcal{I}(w) - \mathcal{I}(\wth) = \frac{1}{2} \rho(\wth)(z - \zth) + \frac{1}{2} \rho^*(\wth, \zth)(w - \wth) - \rho(\wth)(\zth) + R,
\end{align}
with the residuals of the primal problem \eqref{rho} and of the dual problem \eqref{rho*} and the remainder term as in Problem \ref{dwr thm}.
\end{theorem}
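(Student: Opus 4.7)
The plan is to follow the same Lagrangian plus trapezoidal-rule argument used in the proof of Theorem \ref{dwr thm}, keeping careful track of the one extra term that appears because $\wth$ only solves the discrete KKT system up to tolerance $TOL$. With the notation $x=(w,z)$, $\xth=(\wth,\zth)$, and $\mathcal{L}(x)=\mathcal{I}(w)-\mathcal{A}(w;z)$, the exact primal solution still satisfies $\mathcal{A}(w;z)=0$, whereas for the inexact iterate one has in general $\mathcal{A}(\wth;\zth)=\rho(\wth)(\zth)\neq 0$.

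My first step is to rewrite the goal-functional error as
\begin{align*}
\mathcal{I}(w)-\mathcal{I}(\wth) &= \bigl(\mathcal{L}(x)+\mathcal{A}(w;z)\bigr)-\bigl(\mathcal{L}(\xth)+\mathcal{A}(\wth;\zth)\bigr) \\
&= \mathcal{L}(x)-\mathcal{L}(\xth)-\rho(\wth)(\zth),
\end{align*}
so that the non-vanishing residual of the KKT system at $\wth$ directly produces the extra contribution $-\rho(\wth)(\zth)$ advertised in \eqref{error identity with inexact Galerkin projection}. This is the only place where the inexact Galerkin projection enters: since $\zth$ is obtained by solving the ``perturbed'' dual equation exactly, no analogous residual term is spawned from the dual side.

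The remaining piece $\mathcal{L}(x)-\mathcal{L}(\xth)$ I would handle exactly as in the proof of Theorem \ref{dwr thm}. Writing
\[
\mathcal{L}(x)-\mathcal{L}(\xth)=\int_0^1 \mathcal{L}'\bigl(\xth+s(x-\xth)\bigr)(x-\xth)\,\mathrm{d}s
\]
and applying the trapezoidal rule together with the stationarity $\mathcal{L}'(x)=0$ (which follows because $x=(w,z)$ solves both the KKT system and the continuous dual Problem \ref{dual problem}), one obtains
\[
\mathcal{L}(x)-\mathcal{L}(\xth)=\tfrac{1}{2}\mathcal{L}'(\xth)(x-\xth)+R,
\]
with the same remainder $R$ as in \eqref{remainder}. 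Using the decomposition $\mathcal{L}'(\xth)(\cdot)=\mathcal{I}'(\xth)(\cdot)-\mathcal{A}'(\xth;\zth)(\cdot)-\mathcal{A}(\xth;\cdot)$ together with the definitions \eqref{rho} and \eqref{rho*} then splits this into the two weighted-residual contributions $\tfrac{1}{2}\rho(\wth)(z-\zth)$ and $\tfrac{1}{2}\rho^*(\wth,\zth)(w-\wth)$, which combined with the first step delivers precisely \eqref{error identity with inexact Galerkin projection}.

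The one place that requires care, and which I would verify explicitly, is this last identification: one has to check that the derivative $-\mathcal{A}'_w(\wth;\zth)$ of the constraint form at the inexact iterate, combined with $\mathcal{I}'(\wth)$, genuinely reassembles into the adjoint residual $\rho^*$ of \eqref{rho*}, matching signs and correctly producing the second-order $f''(q_h)$ term that reflects the nonlinear dependence of $f$ on the control. Apart from this bookkeeping, the proof is a one-line modification of the exact-projection case, the single substantive novelty being the extra residual $-\rho(\wth)(\zth)$ that quantifies the effect of terminating the Newton iteration at the prescribed tolerance.
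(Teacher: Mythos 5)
Your proof is correct and follows exactly the route the paper takes: define the outer Lagrangian $\mathcal{L}(x)=\mathcal{I}(w)-\mathcal{A}(w;z)$, note that $\mathcal{A}(w;z)=0$ while $\mathcal{A}(\wth;\zth)=\rho(\wth)(\zth)\neq 0$ yields the extra term, and then apply the trapezoidal rule with stationarity $\mathcal{L}'(x)=0$ to recover the two weighted-residual halves plus the remainder $R$. Your writeup is in fact slightly more careful than the paper's (it states the integral representation with the correct base point $\xth+s(x-\xth)$ and flags the sign/derivative bookkeeping for reassembling $\rho^*$, which the paper glosses over), but it is the same argument.
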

\begin{proof}
Introducing the notation $x=(w,z)$, $\xth=(\wth,\zth)$ and the Lagrangian as in Theorem \eqref{dwr thm}, the proof follows from \cite[Proposition 3.1]{RannacherVihharev:2013}.
Let us consider the Lagrangian
\begin{align*}
\mathcal{L}(x) := \mathcal{I}(w) - \mathcal{A}(w;z).
\end{align*} 
It follows that
\begin{align*}
\mathcal{I}(w) - \mathcal{I}(\wth) = \mathcal{L}(x) + \mathcal{A}(w; z) - \mathcal{L}(\xth) - \mathcal{A}(\wth; \zth) 
= \mathcal{L}(x) - \mathcal{L}(\xth) - \mathcal{A}(\wth; \zth),
\end{align*}
where we have used the fact that $w$ satisfies \eqref{compact KKT}, while equation \eqref{compact discrete KKT} is solved only approximately.
Considering the trapezoidal rule and its remainder term, we get analogously to Theorem \ref{dwr thm} the identity
\begin{align*}
\mathcal{I}(w) - \mathcal{I}(\wth) = \mathcal{L}(x) - \mathcal{L}(\xth)  - \mathcal{A}(\wth;\zth)= \frac{1}{2} \mathcal{L}^\prime(\xth)(x - \xth) - \mathcal{A}(\wth;\zth) + R,
\end{align*}
from which the error representation \eqref{error identity with inexact Galerkin projection} can be deduced.
\end{proof}

\begin{definition}[Splitting of the error estimator]
For ease of presentation of the results and to derive the adaptive Newton strategy we split the error estimator into two parts. 
These are identified with the discretization error $\eta_h$ and the error due to the inexact Newton solution of the discrete KKT system $\eta_{KKT}$:
\begin{align*}
\mathcal{I}(w) - \mathcal{I}(\wth) = \mathcal{I}(w) - \mathcal{I}(w_h) + \mathcal{I}(w_h) - \mathcal{I}(\wth) \approx \eta_h + \eta_{KKT} =: \eta.
\end{align*}
Furthermore, using the error identity \eqref{error identity with inexact Galerkin projection} we define
\begin{align}
\eta_h &:= \frac{1}{2} \rho(\wth)(z - \zth) + \frac{1}{2} \rho^*(\wth, \zth)(w - \wth),\label{etahgen}\\
\eta_{KKT} &:= - \rho(\wth)(\zth)\label{etaKKTgen}
\end{align}

\noindent To evaluate the quality of the error estimator, we use the {\bf effectivity index} 
$$I_{\rm eff} = \frac{\eta_h}{\bigl(\mathcal{I}(w)-\mathcal{I}(\wth)\bigr)}.$$ An index close to one means that the estimator is reliable.
In the numerical examples in Section~\ref{sec.num}, we will observe that the indicator $\eta_h$ has a good effectivity index already at the 
beginning of the Newton iterations, when the solution approximation is inaccurate.
\end{definition}

We conclude this section by anticipating that our adaptive strategy defined in the algorithms in Section \ref{sec.algo} exploits the error 
splitting and attempts to balance the two error contributions, i.e.\ to reach the balance $\eta_h \approx \eta_{KKT}$, during the 
Newton iterations. In this way the adaptive strategy attempts to reduce the goal functional of the optimization problem on coarse meshes and it proceeds with mesh refinement only if the discretization error is dominating.
\begin{remark}
In the residual \eqref{rho} the term related to the residuum of the control equation is always zero because the control is finite dimensional. We keep it in the error representation because the same term in the residual $\rho(\wth)(\zth)$ in \eqref{error identity with inexact Galerkin projection} is nonzero due to the inexact Galerkin projection. In fact, this term is essential to get a reliable error estimator. Also in the dual residual \eqref{rho*}
the control term is zero. We keep the full expression here for the sake of completeness in the case of an infinite dimensional control space. 
\end{remark}

\section{Application: Optimal electrode design}
\label{sec.application}

As already mentioned in the introduction, we apply our adaptive strategy to the
optimal design of a micro-pipette to be used in electroporation. The objective is to 
maximize the area around the micro-pipette, where the voltage exceeds a certain threshold $\overline{u}$, 
while on the other hand an upper bound for the voltage $u_{\infty}$ shall not be reached.

The micro-pipette is covered by an isolating material such that the current can only flow to the biological tissue through the
micro-pipette holes. While standard micro-pipettes have only one hole at the tip, holes can also be created on the sides
of the micro-pipette using nanoengineering techniques~\cite{Schwarz:2016}. 

As some of the parameters, as e.g.$\,$the conductivity of the medium $\sigma$, are only known in a very rough approximation, 
we cannot expect to obtain quantitative results at this stage. Therefore, it seems justified for a qualitative study to restrict the setting to
a two-dimensional simplification.
Furthermore, we restrict the possible design of the holes to a symmetric setting with the same size of the openings
on both sides. 
In this context we are interested in the position and size of the openings as design parameters.

The domain of interest is a region around the micro-pipette $\Omega^s\subset \mathbb{R}^2$, where neuronal cells might be activated. 
To reduce the influence of exterior boundary conditions, we use a larger box $\Omega\supset \Omega^s$
around the micro-pipette as simulation domain (Figure~\ref{simdom}), excluding the micro-pipette itself. 
If we choose the box $\Omega$ large enough, we can 
assume without loss of generality zero voltage at the outer boundary.

We assume that a fixed current $\overline{I}$ is applied at the top of the micro-pipette. 
Knowing the resistances of
the electrode, the approximation of the fluxes through the holes of the micro-pipette can be 
derived by physical laws (see the appendix) and are defined
as flux (Neumann) conditions at the boundary of $\Omega$. 
The fluxes are expressed as functions of the radii and positions of the holes (which are the control variables for the optimal design), 
therefore the control variables define the Neumann boundary conditions as a finite dimensional parametrization.

\subsection{Governing equations}

 \begin{figure}[t]
 \begin{center}
 \resizebox*{0.6\textwidth}{!}{\input{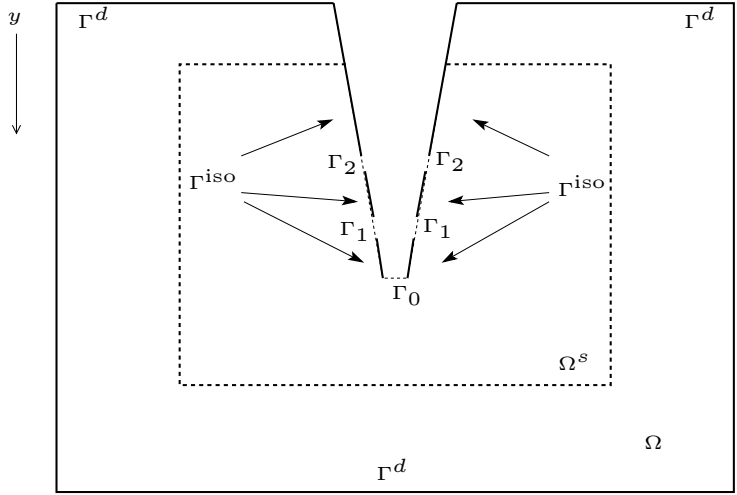}}
 \caption{\label{simdom}
  Simulation domain $\Omega$ with Dirichlet boundary $\Gamma^d$ and 
  Neumann boundary parts $\Gamma_{\rm pip}=\Gamma_0 \cup \Gamma_1 \cup \Gamma_2 \cup \Gamma^{\rm iso}$. The sub-domain $\Omega_s$
  is used to define the objective functional.}
 \end{center}
 \end{figure}

The governing equations can be derived as follows. In the absence of electric charge the Gauss law states
\begin{align}
 \sigma \, {\rm div} \, E = 0 \quad {\rm in }\, \Omega \label{ElectricField}
\end{align}
for the electric field $E: \Omega \to \mathbb{R}$ and the conductivity $\sigma \in \mathbb{R}_+$. With the electrostatic potential
$\varphi$ ($E= -\nabla \varphi$) (\ref{ElectricField}) reads
\begin{align}
 -\sigma \, \Delta \varphi = 0 \quad {\rm in}\, \Omega. \label{potential}
\end{align}
We assume zero Dirichlet conditions at the outer boundaries of the box denoted by $\Gamma^d$, 
see Figure~\ref{simdom}.
Then, we can rewrite (\ref{potential}) in terms of the voltage $u$
\begin{align*}
 -\sigma \Delta u = 0 \quad {\rm in}\, \Omega.
\end{align*}
The boundary condition at the micro-pipette is a flux condition
\begin{align*}
 \sigma \partial_n u = g \quad {\rm on }\, \Gamma_{\rm pip}.
\end{align*}
The flux $g$ is zero at the isolated parts $\Gamma^{\rm iso}$ of the micro-pipette. 
At the holes $\Gamma_0,\dots,\Gamma_n$, the flux is given by
\begin{align*}
 g= J_k,
\end{align*}
where $J_k$ denotes the current density at hole $\Gamma_k$. 
Let us assume that the number of holes is fixed.
Our design parameters $q \in \mathbb{R}^s$ will be the vertical position in terms of the midpoint $m_k$ of the 
holes and their sizes $s_k, k=0...s/2$.
The fluxes $J_k$ depend in a highly nonlinear way on $q$ (see the appendix). As the derivation of the analytical expressions $J_k(q)$ 
for more than two sets of holes are complex, we restrict our study to a maximum of two additional sets 
of holes besides the hole at the tip. 
A possible extension of this work would be to not rely on analytical expressions for the fluxes but extend the computational domain to the interior part of the micro-pipette and approximate the fluxes with a finite element discretization. 
Since the restriction to two sets of holes is not a limitation to show the effectiveness of our approach, we consider the analytical expressions derived in the appendix to reduce the computational effort.

\subsection{Objective functional}

Our aim is to maximize the region where the voltage exceeds a certain threshold $\overline{u}$. At the
same time, we have to ensure that the voltage does not exceed a critical value $u_{\infty} \gg \overline{u}$, with which the biological tissue
might be damaged. The corresponding objective functional would be
\begin{align*}
 J_{\chi}(u) = \int_{\Omega} \chi_M dx,
\end{align*}
where $\chi_M$ denotes the characteristic function of the set $M={\{x\in \Omega\, |\, u(x)\geq \overline{u}\}}$.
The main issue of the objective functional $J_{\chi}$ is its non-differentiability. For the gradient-based optimization algorithm
we present here, the functional is required to be at least differentiable.

Therefore, we consider another objective functional
\begin{align*}
 J_s(u) = \frac{1}{2} \int_{\Omega^s} \left( u - \hat{u} \right)^2 dx
\end{align*}
where we choose a constant function $\hat{u}>\overline{u}$ that is used as a tracking term to reach the desired threshold. 
Moreover, to avoid the influence of a far-away region, where we cannot expect that any cell can be activated, we consider for the functional definition the previously defined domain of interest, i.e.\ the sub-domain $\Omega^s \subset \Omega$ around the micro-pipette.

Using this functional, the voltage $u$ does not exceed the critical value $u_{\infty}$ in the numerical simulations conducted for this paper. 
In fact, we have found that using a value $\hat{u}$ slightly larger than the threshold $\overline{u}$ is a good choice to get
above to the threshold and to stay significantly below the critical value $u_{\infty}$.

The application poses additional restrictions for the design parameters $q$. Each hole has
to lie above the tip of the micro-pipette and below the upper end of the bounding box $y_{\rm tip}+s_k < m_k< y_{\rm up}-s_k$ and the holes should not
overlap $m_k+s_k < m_{k+1}-s_{k+1}$ (otherwise the formulas derived in the appendix for the fluxes at the boundary are not valid anymore).
In the numerical experiments conducted for this paper, however, these conditions were never violated. 
The addition of point-wise state constraints $u\leq u_{\infty}$ and/or control constraints $q\in Q_{\rm ad}\subset \mathbb R^s$, 
using an admissible set $Q_{\rm ad}$, can be done without significant changes in our approach using a penalty method and/or an
active set strategy \cite{ItoKunish:2008}.
Hence to simplify the exposition, we do not incorporate state and control constraints in this work.

Finally, we add a regularization term with parameter $\alpha>0$ to the objective functional as explained in Section \ref{sec:Optimization problem}.
The optimization problem reads in variational formulation
\begin{problem}[Optimization of the micro-pipette]
\label{Optimization of the micropipette}
Find the pair $u \in \mathcal{V}= H^1_0(\Omega; \Gamma^d)$ and $q \in \mathbb{R}^s$ that minimizes the goal functional $J$ under the PDE constrain:
\begin{align}
\begin{split}
\min_{u\in{\cal V},q\in\mathbb{R}^s}  J(u,q) &= \frac{1}{2} \int_{\Omega^s} \left(u-\hat{u}\right)^2 dx + \frac{\alpha}{2}|q|^2\\
\text{s.t. } \sigma ( \nabla u, \nabla \varphi)_{\Omega}&=  (g(q), \varphi)_{\Gamma_{\text{pip}}}  \quad \forall \varphi \in \mathcal{V}.
\label{ConcreteOpt}
\end{split}
\end{align}
\end{problem}

\subsection{Karush-Kuhn-Tucker system}
The Lagrange functional corresponding to problem \ref{Optimization of the micropipette} reads
\begin{align}
\label{Lagrangian pip}
L(u,\lambda,q) = J(u,q) + \sigma ( \nabla u, \nabla \lambda)_{\Omega}- (g(q),\lambda)_{\Gamma_{\text{pip}}}
\end{align}
with an adjoint variable $\lambda \in \mathcal{V} = H^1_0(\Omega;\Gamma^D)$. The first-order optimality conditions are given by:
\begin{problem}[First-order optimality conditions]
Find $u \in \mathcal{V}$, $q \in \mathbb{R}^s$, $\lambda\in \mathcal{V}$, such that
\begin{alignat}{2}
\label{KKTsystem_continuous}
\nonumber L'_{u}(u,q,\lambda)(\delta u)&=(\delta u,u-\hat{u})_{\Omega^s} + \sigma ( \nabla \delta u, \nabla \lambda)_{\Omega}=0  \quad&&\forall \delta u \in \mathcal{V}
,\\
L'_{q}(u,q,\lambda)(\delta q)&=\alpha(\delta q,q)-(g'(q)(\delta q), \lambda)_{\Gamma_{\text{pip}}}=0  \quad &&\forall \delta q \in  \mathbb{R}^s
,\\
\nonumber L'_{\lambda}(u,q,\lambda)(\delta \lambda)&= \sigma ( \nabla u, \nabla \delta \lambda)_{\Omega}- (g(q),\delta \lambda)_{\Gamma_{\text{pip}}} 
=0\quad &&\forall \delta \lambda \in \mathcal{V}
.
\end{alignat}
\end{problem}

\subsection{Discretization and approximation of the flux boundary conditions}
We use the space $\mathcal{V}_h$ of standard $Q_1$ finite elements on a quasi-uniform finite element mesh $\mathcal{T}_h$.
Altogether, the discrete optimality system reads:
\begin{problem}[Discrete first-order optimality conditions]
Find $u_h \in \mathcal{V}_h$, $q_h \in \mathbb{R}^s$, $\lambda_h \in \mathcal{V}_h$ such that
\begin{alignat}{2}
\label{KKTsystem}
\nonumber (\delta u_h,u_h-\hat{u})_{\Omega^s} + \sigma ( \nabla \delta u_h, \nabla \lambda_h)_{\Omega}
&=0&&\quad  \forall \,\delta u_h \in \mathcal{V}_h
,\\
\alpha(\delta q,q_h)-(g'(q_h)(\delta q), \lambda_h)_{\Gamma_{\text{pip}}}&=0&&\quad  \forall \, \delta q \in  \mathbb{R}^s
,\\
\nonumber \sigma ( \nabla u_h, \nabla \delta \lambda_h)_{\Omega}- (g(q_h),\delta \lambda_h)_{\Gamma_{\text{pip}}}
&=0 &&\quad \forall \,\delta \lambda_h \in \mathcal{V}_h.
\end{alignat}
\end{problem}

Denoting by $y$ the vertical position on the micro-pipette (see Figure \ref{simdom}) and by $y_{\rm tip}$ the position of the tip, 
it holds for the flux function $g$ that
\begin{align*}
 g(y,q) = \begin{cases}
         J_0(q), \quad &y=y_{\rm tip},\\
         J_k(q), \quad &| y - m_k| < s_k/2 \text{ for } k \in [1,...,s/2],\\
         0 \quad &\text{else}.
        \end{cases}
\end{align*}
Note that $g$ is discontinuous in both the vertical coordinate $y$
and the parameter vector $q$. This is a problem, since the derivative $g'(q)$ appears 
in the optimality system (\ref{KKTsystem}).
Furthermore, numerical methods like Newton-type methods for solving (\ref{KKTsystem}) require at least the first
derivative of the system which includes
$g''(q)$. To deal with this issue, we introduce a smooth approximation of $g$.
Let $\chi_{[-1,1]}$ be the characteristic function on the interval $[-1,1]$.
A smooth approximation to $\chi_{[-1,1]}$ is given by $\chi_{[-1,1]} \approx \exp(-x^{2\beta})$. 
Based on this approximation, we define a regularized flux function $\tilde{g}$ by 
\begin{align}
\begin{split}\label{expg}
 \tilde{g}(y,q) = \begin{cases}
         J_0(q), \quad &y=y_{\rm tip},\\
         \sum_{k=1}^s J_k(q) \exp\left(-\frac{(y - m_k)^{2\beta}}{4 s_k^2}\right) \quad &\text{else},
        \end{cases}
\end{split}
\end{align}
for some $\beta\in\mathbb{N}$, see Figure~\ref{fig.approx_char}.

\begin{figure}
  \centering
   \includegraphics[width=0.6\textwidth]{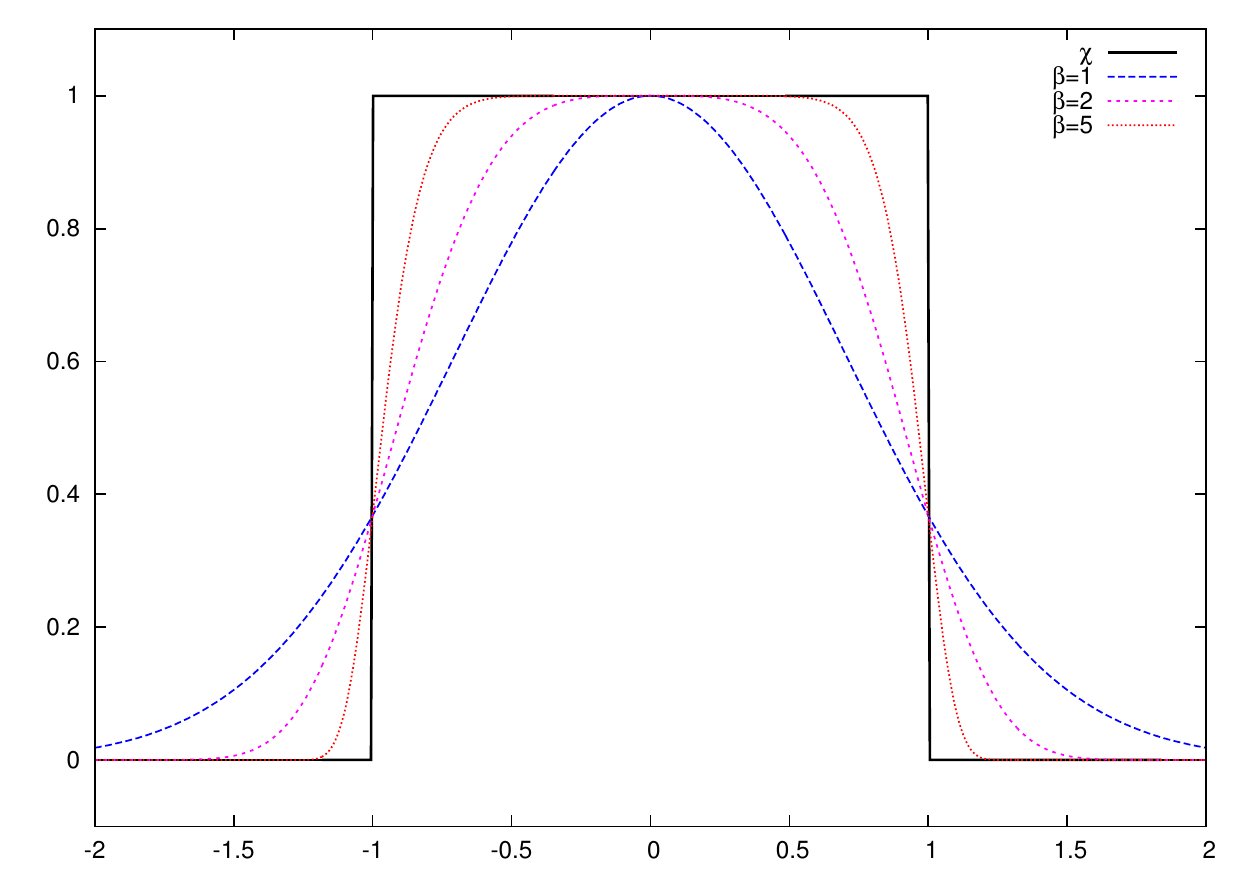}
 \caption{\label{fig.approx_char} Smooth approximation $\exp(-x^{2\beta})$ with $\beta=1,2,5$ of the characteristic function $\chi_{[-1,1]}$.}
\end{figure}

Furthermore, we use a summed quadrature formula with sufficient integration points to evaluate the boundary integrals such 
that the decay of $\tilde{g}$ at the boundary of the openings is appropriately approximated.

\subsection{Dual problem for the error estimation}
As explained in Section \ref{intro DWR} to estimate the discretization and iteration errors we need an approximation of the solution of an ad-hoc dual problem. In the specific case of the micro-pipette optimization the dual problem reads
\begin{problem}[Dual micro-pipette problem]
Find $z^u, z^\lambda \in H^1_0(\Omega;\Gamma^{\text{d}})$ and $z^q \in \mathbb R^s$ such that
\begin{alignat}{2}
\nonumber\bigl(z^u,\psi^u\bigr)_{\Omega^s}+  \sigma \bigl( \nabla z^{\lambda},  \nabla \psi^u\bigr)_{\Omega} &= - \mathcal{I}'_u(u,q,\lambda)(\psi^u) && \qquad \forall \psi^u \in \mathcal{V},\\
\alpha\bigl(z^q,\psi^q\bigr)_{\Omega} -\bigl(z^{\lambda},g'(q)(\psi^q)\bigr)_{\Gamma_{\text{pip}}} -\bigl(\lambda, g^{\prime\prime}(q)(\psi^q)\bigr)_{\Gamma_{\text{pip}}} &= - \mathcal{I}'_q(u,q,\lambda)(\psi^q) && \qquad \forall \psi^q \in \mathbb R^s,\\
\nonumber\sigma \bigl( \nabla z^u, \nabla \psi^\lambda\bigr)_{\Omega}- \bigl(g'(q)(z^q), \psi^\lambda\bigr)_{\Gamma_{\text{pip}}} &= - \mathcal{I}'_{\lambda}(u,q,\lambda)(\psi^\lambda) && \qquad \forall \psi^\lambda \in \mathcal{V}.
\end{alignat}
\end{problem}

This problem is discretized with finite elements to get the approximation $z_h=(z^u_h, z^\lambda_h, z^q_h)$. 
Furthermore, we observe that the system matrix of the dual problem is exactly the same matrix used in the Newton method 
to solve the primal problem, i.e.\ to solve the discrete KKT system \eqref{KKTsystem}. It is the Hessian of the Lagrange functional~\eqref{Lagrangian pip}.
It follows that the solution of the dual problem for the DWR method corresponds to one additional Newton step with
a different right-hand side, which will be explicitly shown later in the algorithmic section \ref{sec.algo}.

\subsection{A posteriori error estimators}
\label{sec.error estimator}
We conclude this section by specifying the concrete error estimators for the KKT system (\ref{KKTsystem}).
Following the derivation in Section~\ref{sec.estimator}, it holds that
\begin{align*}
\mathcal{I}(\tilde w_h) - \mathcal{I}(w)&\approx \eta_h+\eta_{KKT}
\end{align*}
with $\eta_h$ and $\eta_{KKT}$ specified in (\ref{etahgen}) and (\ref{etaKKTgen}).
An evaluation of the integrals over the cells leads in general to poor local error indicators due to the oscillatory nature of the residuals,
see \cite{CarstensenVerfuerth:1999}.
To avoid this behavior we integrate the residuals cell-wise by parts obtaining boundary terms (jump terms) to distribute the error 
on the inner cell-edges.
To simplify the notation we use the symbols without tilde implicitly considering that all quantities $w_h$ and $z_h$ are perturbed. Furthermore, we separate in (\ref{etahgen}) the contribution from the primal and the dual residuals, i.e.\ $\eta_h = \eta_h^p + \eta_h^d$ with
\begin{align}
\label{etah}
\nonumber\eta^p_h&= \sum_{K \in \mathcal{T}_h, K \subset \Omega_s} \big(\hat u - u_h, \Pi_h z_h^u \big)_K + \sum_{K \in \mathcal{T}_h} 
-\big( \sigma \Delta \lambda_h,\Pi_h z_h^u\big)_K + \big(r_h(\lambda_h),\Pi_h z_h^u\big)_{\partial K}\bigr\}\\
&\quad+ \sum_{K \in \mathcal{T}_h} \left\{ -\sigma \big(\Delta u_h,\Pi_h z_h^{\lambda}\big)_K + \big(r_h(u_h),\Pi_h z_h^{\lambda}\big)_{\partial K} 
- \big( g(q), \Pi_h z_h^{\lambda} \big)_{\partial K \cap \Gamma_{\text{pip}}}\right\}\\[1em]
\label{etahd}
\eta^d_h&=
\nonumber \sum_{K \in \mathcal{T}_h, K \subset \Omega_s} \big(z^u_h, \Pi_h u_h \big)_K + \sum_{K \in \mathcal{T}_h} 
-\big( \sigma \Delta z^\lambda_h,\Pi_h u_h\big)_K + \big(r_h(z_h^\lambda),\Pi_h u_h\big)_{\partial K}+ \mathcal{I}'_u(u,q,\lambda)(\Pi_h u_h) \bigr\}\\
&\quad+ \sum_{K \in \mathcal{T}_h} \left\{ -\sigma \big(\Delta z^u_h,\Pi_h {\lambda}_h\big)_K + \big(r_h(z_h^u),\Pi_h \lambda_h\big)_{\partial K} 
- \big( g'(q)(z^q), \Pi_h \lambda_h \big)_{\partial K \cap \Gamma_{\text{pip}}}+\mathcal{I}'_{\lambda}(u,q,\lambda)(\Pi_h \lambda_h ) \right\}
\end{align}
with the boundary residuals $r_h(\cdot)$ defined on $V_h$ by
\begin{alignat}{3}
 r_h(v_h)&=0 \text{  on } \Gamma^d,\quad r_h(v_h)&=\sigma \partial_n v_h \text{  on }  \Gamma_{\text{pip}},
 \quad r_h(v_h)&=\frac{\sigma}{2}[\partial_n v_h] \text{  on } \partial K \setminus (\Gamma^d \cup  \Gamma_{\text{pip}}).
\end{alignat}

In the error representation \eqref{error identity with inexact Galerkin projection} both the continuous primal and dual solutions $w$ and $z$ are used as weights. In fact, using the continuous weights and considering the remainder term, this expression is an identity. It becomes an estimation after substituting the unknown continuous solutions with computable quantities. As shown in several applications \cite{CarraroGoll:2017,Richter:2012,Richter:2010,Rannacher:2009,Carraro_RFDT:2007,BeckeBMRV:2007} one efficient method to produce computable quantities is the use of a patch-wise higher-order approximation of the terms $z_h$ and $w_h$.
In particular, we have used the following interpolation operator with $v_h \in V_h$
\begin{align*}
 \Pi_h v_h := (I_{2h}^{(2)} - \mathrm{id}) v_h,
\end{align*}
where $I_{2h}^{(2)}: V_h \to V_{2h}^{(2)}$ 
denotes the nodal interpolation into the space of quadratic polynomials on the patch mesh $\mathcal{T}_{2h}$ obtained by joining together four cells patchwise, as shown in Figure \ref{fig.hanging nodes}.

\begin{figure}[hbt]
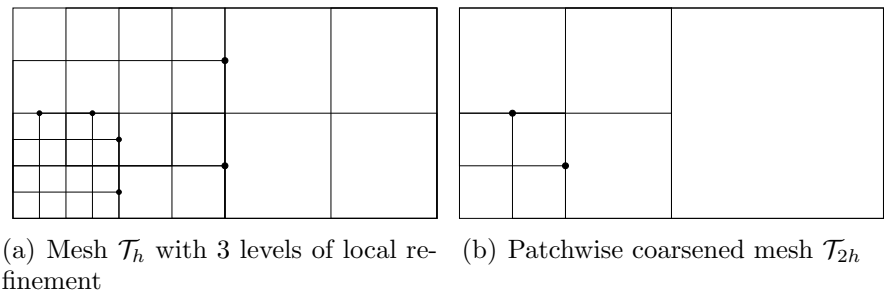

\centering
\subfigure[Mesh $\mathcal{T}_h$ with 3 levels of local refinement\label{subfig.fine grid}]{
\resizebox{0.35\textwidth}{!}{\input{PatchGitter}}}
\subfigure[Patchwise coarsened mesh $\mathcal{T}_{2h}$ \label{subfig.patch grid}]{
\resizebox{0.35\textwidth}{!}{\input{PatchGitter2}}}
\caption{{Example of a mesh with patch-structure and hanging nodes \subref{subfig.fine grid} and its patchwise coarsening \subref{subfig.patch grid}.}}\label{fig.hanging nodes}
\end{figure}

In the case of uniformly refined meshes and smooth primal and dual solutions, this weight-approximation has been
justified analytically in \cite{BangerthRannacher:2003}.

The estimator for the iteration error is defined by
\begin{align}
\begin{split}\label{etaKKT}
 \eta_{KKT} := -\mathcal{A}(w_h; z_h) &= -\sum_{K \in \mathcal{T}_h, K \subset \Omega_s} (\hat u -u_h,z_h^u)_K 
 + \sum_{K \in \mathcal{T}_h} \left\{ (\sigma \Delta \lambda_h,z_h^u)_K - (r_h^{\lambda},z_h^u)_{\partial K}\right\}\\
&\quad+ \sum_{K \in \mathcal{T}_h} \left\{ \sigma (\Delta u_h,z_h^{\lambda})_K - (r_h^u,z_h^{\lambda})_{\partial K}\right\}\\
&\quad- \alpha(q_h,z_h^q) + \sum_{K \in \mathcal{T}_h} \big(z_h^{\lambda},g'(q)(z_h^q)\big)_{\partial K \subset \Gamma_{\text{pip}}}.
\end{split}
\end{align}

\section{Algorithms}
\label{sec.algo}

In this section we introduce the algorithms that are compared in
Section~\ref{sec.num}. In addition to the fully adaptive algorithm, 
we will also specify a global refinement strategy and a purely mesh 
adaptive algorithm. The latter is the standard \textit{Dual Weighted Residual} method
applied to (\ref{KKTsystem_continuous}). 
The residual $\rho^k$ of the k-th iterate $w^k$ is defined by
\begin{align}
\begin{split}\label{residual}
\rho^k:=\rho(w^k)(\cdot):= 
\begin{pmatrix}
L'_{u}(w^k)(\cdot)
\\
L'_{q}(w^k)(\cdot)
\\
L'_{\lambda}(w^k)(\cdot)
\end{pmatrix}
\end{split}
\end{align}
The corresponding Hessian matrix is given by 
\begin{align}
\label{Hessian}
H:=H(w^k)(\cdot,\cdot)
&:=
\begin{pmatrix}
L^{\prime\prime}_{uu}      (w)(\cdot,\cdot) & L^{\prime\prime}_{uq}      (w)(\cdot,\cdot) & L^{\prime\prime}_{u\lambda}       (w)(\cdot,\cdot)\\
L^{\prime\prime}_{qu}      (w)(\cdot,\cdot) & L^{\prime\prime}_{qq}      (w)(\cdot,\cdot) & L^{\prime\prime}_{q\lambda}       (w)(\cdot,\cdot)\\
L^{\prime\prime}_{u\lambda}(w)(\cdot,\cdot) & L^{\prime\prime}_{q\lambda}(w)(\cdot,\cdot) & L^{\prime\prime}_{\lambda \lambda}(w)(\cdot,\cdot)\\
\end{pmatrix}
\\
&=
\begin{pmatrix}
 L^{\prime\prime}_{u u}(w)(\cdot,\cdot) & L^{\prime\prime}_{uq}(w)(\cdot,\cdot) & A'_{u}(w)(\cdot,\cdot)\\
 L^{\prime\prime}_{q u}(w)(\cdot,\cdot) & L^{\prime\prime}_{qq}(w)(\cdot,\cdot) & A'_{q}(w)(\cdot,\cdot)\\
 A'_{u}                (w)(\cdot,\cdot) & A'_{q}               (w)(\cdot,\cdot) & 0\\
\end{pmatrix}
\end{align}

Let a tolerance
$TOL_{KKT}$ for the Newton residual $\rho^k$ be given.
We formulate the algorithms for a damped Newton method with a damping parameter $\alpha_N\in(0,1]$.
The full Newton step is obtained for $\alpha_N=1$. 

In Algorithm \ref{algo:global} we present the global refinement strategy with a given number of refinement steps $n_{ref}$.
\begin{algorithm}[h!]
\begin{algorithmic}
\STATE Initialization:  Set $k=0$, choose $w^0=(u^0,\lambda^0,q^0)$ and a mesh $\Omega_h^0$.
\FOR{$l=0...n_{ref}$} 
\STATE Compute the residual $\rho^k=\rho(w^k)$ by (\ref{residual}). 
\WHILE{$\rho^k \geq TOL_{KKT}$} 
\STATE Compute the Hessian $H^k=H(w^k)$ by (\ref{Hessian}).
\STATE Solve $H^k \Delta w=-\rho^k$.
\STATE Update $w^{k+1}=w^k+\alpha_N \Delta w$. 
\STATE Compute the next residual $\rho^{k+1}=\rho(w^{k+1})$ by (\ref{residual}) and set $k=k+1$.
\ENDWHILE
\STATE {\bf end}
\STATE Refine the mesh $\Omega_h^k$ by global refinement, interpolate $w^{k+1}$ to the refined grid.
\ENDFOR
\STATE {\bf end}
\end{algorithmic}
\caption{Global refinement}
\label{algo:global}
\end{algorithm}
In this algorithm, neither the discretization nor the iteration error is estimated. 
Thus, we have no control over these errors and
the usual stopping criterion is based on a tolerance on a norm of the residual. 
This generic criterion does not allow to control the inexactness of the optimal solution that is needed 
to advance with the optimization on finer grids before machine precision is reached.

Next, we introduce the purely mesh adaptive Algorithm \ref{meshadaptive}. We introduce a further tolerance $TOL$ for the discretization error.
In addition to the notation introduced above, we
denote the right-hand side of the dual problem by
\begin{align}
\begin{split}\label{dualrhs}
\zeta^k:=\zeta(w^k)(\cdot):=
\begin{pmatrix}
{\cal I}'_{u}(w^k)(\cdot)
\\
{\cal I}'_{q}(w^k)(\cdot)
\\
{\cal I}'_{\lambda}(w^k)(\cdot)
\end{pmatrix}
.
\end{split}
\end{align}

\begin{algorithm}[h!]
\begin{algorithmic}
\STATE Initialization:  Set $k=0$, choose $w^0=(u^0,\lambda^0,q^0)$ and a mesh $\Omega_h^0$.
\WHILE{$\eta \geq TOL$} 
\STATE Compute the residual $\rho^k=\rho(w^k)$ by (\ref{residual}). 
\STATE Compute the Hessian $H^k=H(w^k)$ by (\ref{Hessian}).
\WHILE{$\rho^k \geq TOL_{KKT}$} 
\STATE Solve $H^k \Delta w=-\rho^k$.
\STATE Update $w^{k+1}=w^k+\alpha_N \Delta w$.
\STATE Compute the next residual $\rho^{k+1}=\rho(w^{k+1})$ by (\ref{residual}).
\STATE Compute the next Hessian $H^{k+1}=H(w^{k+1})$ by (\ref{Hessian}) and set $k=k+1$. 
\ENDWHILE
\STATE {\bf end}
\STATE Compute $\zeta^{k+1} = \zeta(w^{k+1})$ by (\ref{dualrhs}).
\STATE Solve the dual problem $H^{k+1} z^{k+1} = -\zeta^{k+1}$.
\STATE Evaluate the error estimator $\eta_h^{k+1}$ by (\ref{etah}) and refine the mesh adaptively.
\STATE Project the solution $w^{k+1}$ to the new grid $\Omega_h^{k+1}$.
\ENDWHILE
\STATE {\bf end}
\end{algorithmic}
\caption{Mesh adaptive}
\label{meshadaptive}
\end{algorithm}
Based on the error indicators several refinement strategies can be derived. We refer to \cite{BeckerRannacher:2001} for
an overview. In this work we use a refinement strategy based on a minimization of the expected error and
the computational effort required for the solution on the refined mesh, see \cite{Richter:Diss}.

Finally, we concretize the fully adaptive Algorithm \ref{fulladaptive} which is the novelty of this contribution.
\begin{algorithm}[h!]
\begin{algorithmic}
\STATE Initialization:  Set $k=0$, choose $w^0=(u^0,\lambda^0,q^0)$ and a mesh $\Omega_h^0$.
\WHILE{$\eta \geq TOL$} 
\STATE Compute the residual $\rho^k=\rho(w^k)$ by (\ref{residual}). 
\STATE Compute the Hessian $H^k=H(w^k)$ by (\ref{Hessian}).
\WHILE{$|\eta_{KKT}^k| \le c_b |\eta_{h}^k|$} 
\STATE Solve $H^k \Delta w=-\rho^k$.
\STATE Update $w^{k+1}=w^k+\alpha_N \Delta w$.
\STATE Compute the next residual $\rho^{k+1}=\rho(w^{k+1})$ by (\ref{residual}).
\STATE Compute the next Hessian $H^{k+1}=H(w^{k+1})$ by (\ref{Hessian}). 
\STATE Compute $\zeta^{k+1} = \zeta(w^{k+1})$ by (\ref{dualrhs}).
\STATE Solve the dual problem $H^{k+1} z^{k+1} = -\zeta^{k+1}$.
\STATE Evaluate the error estimator $\eta_h^{k+1}$ by (\ref{etah}) and set $k=k+1$. 
\ENDWHILE
\STATE {\bf end}
\STATE Refine the mesh adaptively and project the solution $w^{k+1}$ to the new grid $\Omega_h^{k+1}$.
\ENDWHILE
\STATE {\bf end}
\end{algorithmic}
\caption{Fully adaptive}
\label{fulladaptive}
\end{algorithm}
It remains to specify the constant $c_b$ in Algorithm~\ref{fulladaptive} to determine the balancing between
$\eta_h^k$ and $\eta_{KKT}^k$. A straightforward choice would be $c_b=1$, i.e.\ to stop the Newton iteration, as soon as the iteration
error is smaller than the discretization error. Nevertheless, we have decided in the numerical examples conducted for this paper
to use a smaller value of $c_b$. In this way, we obtain that the Newton method remains in 
the region of quadratic convergence on the next finer grid, once this convergence rate is achieved on the previous coarser one.
In the numerical examples below, we have used $c_b=0.1$. In general,
the optimal choice for $c_b$ depends on the specific application.

On the first sight, the two adaptive algorithms look very similar. The main difference is the stopping criterion of the second \textit{while}-loop which
depends on the balancing of the error estimators in the fully adaptive algorithm and on the Newton residual in the mesh-adaptive algorithm. 
The balancing criterion allows the Newton method to iterate on the actual grid as much as needed to reach the discretization error, therefore it saves at each level unneeded iterations.
However, it has to be noted that in the full adaptive algorithm a dual problem has to be solved within the second \textit{while}-loop in every iteration, 
while in the mesh-adaptive 
algorithm a dual problem is solved only once on each mesh level. We investigate in the next section how this additional 
computational effort compares to the computational savings due to the improved stopping criterion.
\section{Numerical results}
\label{sec.num}

In this section, we study different numerical examples to test the algorithms. First, we study a simple test problem on a slit domain
in Section~\ref{sec.num_slit}. The purpose of this test problem
is to test the error estimators $\eta_h$ and $\eta_{KKT}$. To this end, we compute
effectivity indices and investigate if the estimators are relatively independent of each other. 
Then we test the algorithms for optimal electrode design in Section~\ref{sec.num_pipette}.
We compare the fully adaptive algorithm to the two other refinement algorithms introduced
in Section~\ref{sec.algo} with respect to errors, degrees of freedom and computational times.
Finally, we compare the optimal results for 0, 1 and 2 sets of holes in addition to the opening at the tip.

All the numerical results presented in this section are obtained using the finite element library deal.II \cite{BangeHK:2007}.
To calculate the lengthy first and second derivatives of $g(q)$ with respect to the design parameter $q$
(see the appendix) we have used the automatic differentiation tool ADOL--C \cite{Walther:2012}. The runtimes 
shown were obtained on a desktop computer with a 2.66GHz Intel Core 2 Quad processor (Q9400). As linear solver
within the Newton algorithm, a direct solver 
is applied (UMFPACK~\cite{Davis:2004}).

\subsection{Test problem on a slit domain}
\label{sec.num_slit}

We start by studying a test problem on the slit domain
\[
\Omega= (0,1)^2 \setminus \{0.5,y) \,|\, 0<y<0.5\}, 
\]
see Figure~\ref{tab.slit_global} on the right.
We set homogeneous Dirichlet data on the outer boundary except for the upper part $\Gamma^{\rm top}$, 
where the Neumann condition $\partial_n u =q^2\pi\sin(\pi x)$ is imposed.
As objective functional, we set $J(u,q)=\frac{1}{2}\int_{\Omega} (u-u_0)^2 dx+ \frac{\alpha}{2}q^2$, 
where $u_0= \frac{1}{\sigma} \sin(\pi x)\sin(\pi y)$ and $\sigma=1.72$. The quantity of interest is $\mathcal{I}(u,q)=q^2$.

To test the estimator $\eta_h$, we first show results of the global refinement strategy in Figure~\ref{tab.slit_global} on the left. 
The optimal solution $u$ shows a singularity in the gradient at the top of the slit. Therefore, 
the error in the goal functional ${\cal I}(q)$ as well as the error estimator decrease linearly as expected (see e.g.$\,$\cite{DeuflhardWeiser}, Section 6.3.2).
As the Newton algorithm is solved up to 
a small tolerance $\rho^k < 10^{-10}$ in the Newton residual, it holds $\eta\approx\eta_h$. 
The effectivity indices $I_{\rm eff} = \eta / (I(q)-I(q_h))$ on the
finer grids are $\eta\approx0.32$. These values show that the error estimator estimates the order of magnitude of the error
correctly, the deviation of estimator and real error
being mainly due to the approximation of the weight $z-z_h$ by the interpolation $I_{2h}^2 z_h - z_h$.

\begin{figure}
\centering
\begin{minipage}{11cm}
 \begin{tabular}{c|cccccc} 
\#dofs & $\mathcal{I}(q)-\mathcal{I}(q_h)$  & $\eta_h$ & $\mathcal{I}_{eff}$ & \#steps & time[s]\\
\hline
54      & 1.5e-01 & 1.1e-01 &0.71 & 6 & 3\\
170     & 6.1e-02 & 2.9e-02 &0.47 & 4 & 5\\
594     & 2.7e-02 & 1.0e-02 &0.40 & 3 & 9\\
2210    & 1.2e-02 & 4.5e-03 &0.36 &3 & 18\\
8514    & 6.1e-03 & 2.0e-03 &0.34 &3 & 39 \\
33410   & 3.0e-03 & 9.8e-04 &0.32 & 2 & 84\\
132354  & 1.4e-03 & 4.8e-04 &0.32 & 2 & 266 \\
526850  & 7.4e-04 & 2.3e-04 &0.32 & 2 & 1157\\
\end{tabular}
\end{minipage}
\begin{minipage}{4cm}
 \resizebox*{0.75\textwidth}{!}{\input{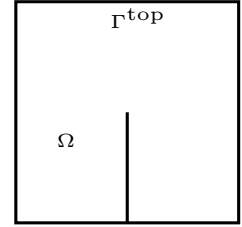}}
  \end{minipage}

\caption{Left: Error, error estimator, efficiency indices, number of Newton steps and computational times for the global refinement algorithm applied to the
test problem on the slit domain. Right: Sketch of the slit domain $\Omega$\label{tab.slit_global}.}
\end{figure}

In Figure~\ref{fig.plots_slit}, we compare the results obtained by global mesh refinement to the adaptive strategy introduced in Section~\ref{sec.algo}. 
We plot the error in ${\cal I}(q)$ against the degrees of freedom $N$ (left) and against the computational times
(right). The error for the two adaptive methods differ only marginally, hence we plot only the fully adaptive result 
in the left plot. In this example the error decreases approximately with $\mathcal{O}(N^{-1/2}) = \mathcal{O}(h)$ for
global refinement and with $\mathcal{O}(N^{-1})$ for the adaptive mesh refinement. 
While the global refinement strategy shows for example an error of $7.4\times10^{-4}$
for $526.850$ degrees of freedom, the adaptive strategy reaches an error of $5.6\cdot10^{-4}$ with only $7.722$ degrees of freedom.
The solution $u_{\rm opt}$ on an adaptively refined mesh is shown in Figure~\ref{fig.slit} on the right.

\begin{figure}[t]
 \centering
  \includegraphics[width=0.45\textwidth]{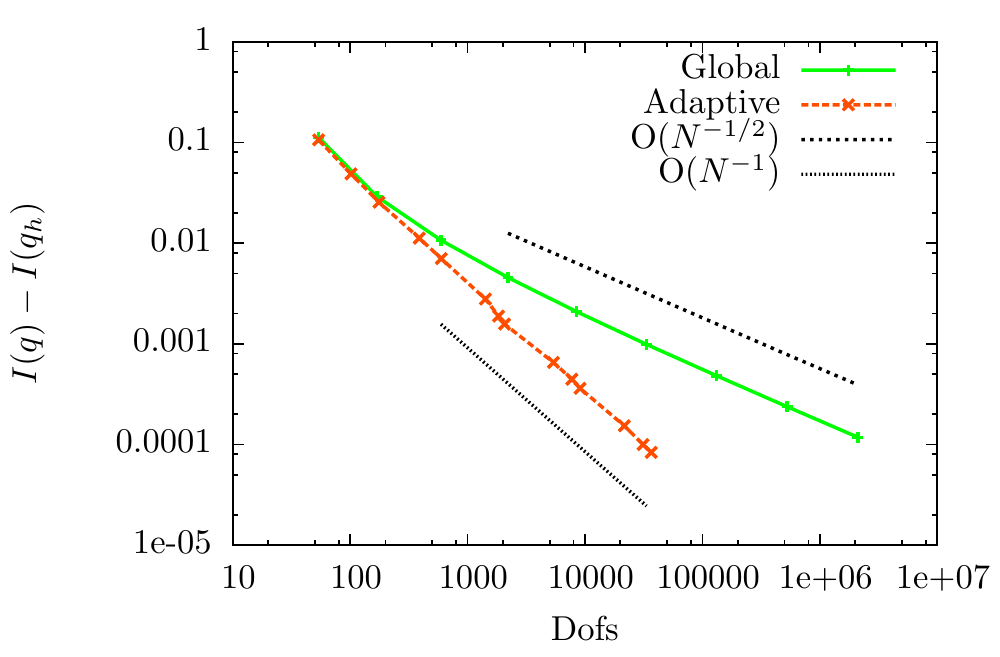}
   \includegraphics[width=0.45\textwidth]{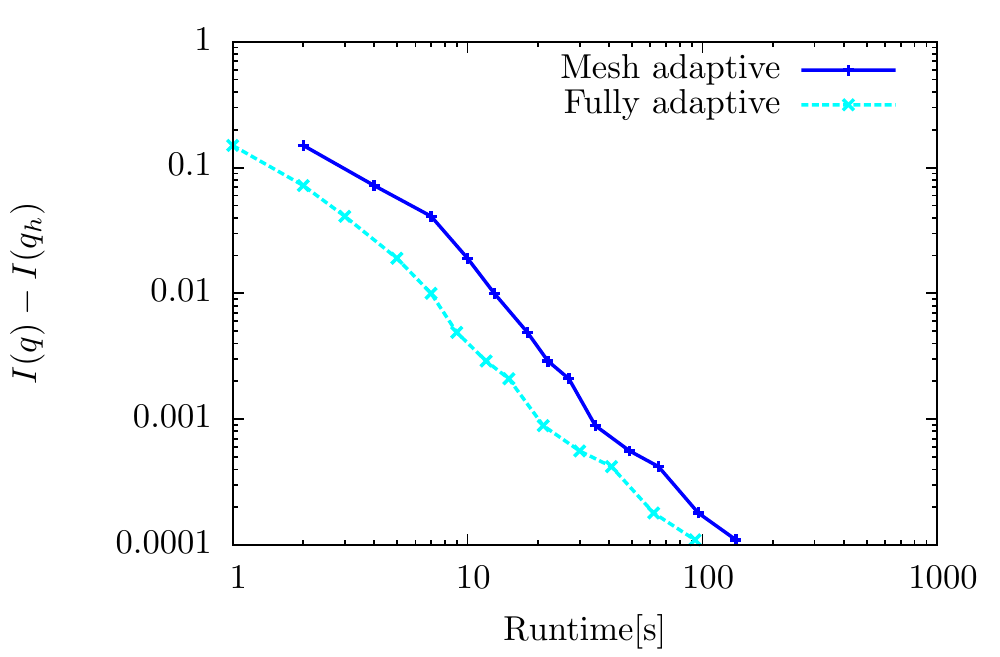}
  \caption{\textit{Left}: Error $I(q)-I(q_h)$ for global mesh refinement and 
  the fully adaptive refinement strategy for the test problem on the slit domain. For comparison we plot reference rates ${\mathcal O}(h^{-s})$
  for $s=1,2$. \textit{Right}: Computational times for the three strategies: global refinement, adaptive mesh refinement and adaptive
  Newton method \label{fig.plots_slit}
}
\end{figure}

With respect to computational times, the fully adaptive and the mesh-adaptive algorithm show asymptotically
the same convergence behavior, with a clear advantage of the fully adaptive algorithm. To study this difference
quantitatively, we show the number of Newton steps and the computational times in Figure~\ref{fig.slit} on the left.

\begin{figure}[t]
\renewcommand{\arraystretch}{1.2}
\centering
\begin{minipage}{9cm}
\small
\begin{tabular}{cc|cc|cc} 
 & & \multicolumn{2}{c|}{mesh adaptive} &\multicolumn{2}{c}{fully adaptive} \\
\hline
\#dofs &$\mathcal{I}(q)-\mathcal{I}(q_h)$ & \#steps & time[s] & \#steps & time[s]\\
\hline
54     & 1.5e-01 & 6 & 2 & 2 & 1\\
102    & 7.2e-02 & 4 & 4 & 1 & 2\\
176    & 4.1e-02 & 3 & 7 & 1 & 3\\
388    & 1.9e-02 & 3 & 10 & 1 & 5\\
598    & 1.0e-02 & 3 & 13 & 1 & 7\\
1422   & 4.9e-03 & 3 & 18 & 1 & 9\\
1840   & 2.9e-03 & 2 & 22 & 1 & 12\\
2068   & 2.1e-03 & 2 & 27 & 1 & 15\\
5398   & 8.9e-04 & 2 & 35 & 1 & 21\\
7722   & 5.6e-04 & 2 & 49 & 1 & 30\\
9080   & 4.2e-04 & 2 & 65 & 1 & 41\\
21592  & 1.8e-04 & 2 & 96 & 1 & 62\\
31192  & 1.1e-04 & 2 & 139 & 1 & 93\\
36680  & 9.0e-05 & 2 & 193 & 1 & 131\\
\end{tabular}
\end{minipage}
\begin{minipage}{6.8cm}
 \includegraphics[width=\textwidth]{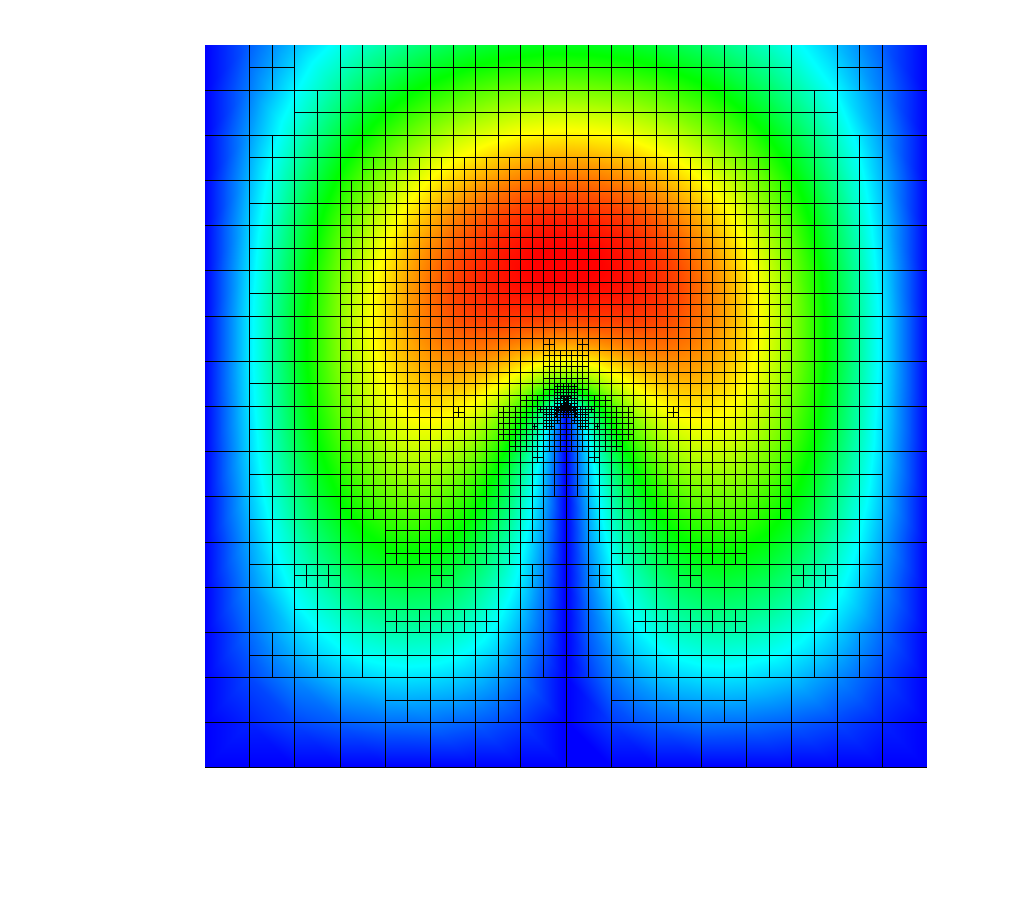}
 \end{minipage}
  \caption{Left: Comparison of Newton steps and computational time for the mesh-adaptive and the fully adaptive algorithms
on the test problem on the slit domain. Right: Optimal solution $u$ on an adaptively refined mesh for the slit domain.
\label{fig.slit}} 
\end{figure}

From the third mesh level, the mesh-adaptive algorithm needs two Newton steps on each mesh level, while the fully adaptive
one needs only one. As mentioned in Section~\ref{sec.algo} the latter strategy requires the solution of a dual problem after each Newton step, 
while in the mesh-adaptive algorithm 
a dual problem has to be solved only once after the last Newton step on each mesh level. As the system matrix for the primal and dual problem
have the same structure, the cost to solve them is comparable. Hence, two KKT systems have to be solved in the fully adaptive algorithm on
each of the finer mesh levels, compared to three for the mesh adaptive algorithm. This ratio of 2:3 can be observed in the computational times.
To reduce the error in ${\cal I}(q)$ below $10^{-4}$, the fully adaptive algorithm needs for example 131 seconds in contrast to 193 s
for the mesh adaptive one.
Using global refinement, 1157 s are needed to reach this tolerance, see Figure~\ref{tab.slit_global}.

Finally, we want to test the iteration error indicator $\eta_{KKT}$. As the iteration error of the Newton method 
decreases very quickly in the first Newton steps, this is barely visible in the previous calculations.
To investigate the iteration error indicator in more detail, we use a damped Newton iteration
with a damping factor that reduces the convergence rate of the iteration significantly.

Given an iterate $x^k$, the next iterate is defined by
\[
 x^{k+1} = x^k + \alpha_N \delta x,
\]
where $\delta x$ is the Newton direction and the damping parameter is chosen as $\alpha_N=0.5$. To keep the discretization
error small, we choose a very fine mesh with $526.850$ nodes. The results are shown in Table~\ref{KKTerror}. 
We observe that the discretization error $\eta_h$ varies slightly in the first iterates by less than 25\%
and stays then nearly constant. This indicates that the
two error estimators are asymptotically independent.
Moreover, we see that
the iteration error dominates the overall error until the ninth iteration. Up to then, the
effectivity index lies between $0.87$ and $1.16$. This indicates that the iteration error
indicator is very reliable in this test problem. After about 14 iterations, the discretization error
becomes dominant and the efficiency indices are about $0.32$ as observed in the previous test. 
At a certain iteration the two errors are comparable but with opposite sign, i.e.\ $\eta_{KKT} \approx -\eta_h$. 
A cancellation problem occurs and the efficiency index in step 11 becomes negative. This is typical when trying 
to split the error in different contributions and cannot be avoided.

\begin{table}
\centering
\small
\begin{tabular}{c|ccccccc} 
 Iteration & $\mathcal{I}(q)-\mathcal{I}(q_h)$  & $\eta_{KKT}$ & $\eta_{h}$ & $\mathcal{I}_{eff}$ \\
 \hline
1 & -2.09e-01 & -1.82e-01 & 1.81e-04 & 0.87  \\
3 & -6.69e-02 & -6.38e-02 & 2.15e-04 & 0.95  \\
5 & -1.81e-02 & -1.85e-02 & 2.30e-04 & 1.01  \\
7 & -4.14e-03 & -4.86e-03 & 2.35e-04 & 1.12  \\
9 & -4.90e-04 & -1.23e-03 & 2.36e-04 & 2.03  \\
11&  4.33e-04 & -3.08e-04 & 2.37e-04 & -0.16 \\
13&  6.64e-04 & -7.72e-05 & 2.37e-04 & 0.24  \\
15&  7.22e-04 & -1.93e-05 & 2.37e-04 & 0.30  \\
17&  7.37e-04 & -4.82e-06 & 2.37e-04 & 0.31  \\
19&  7.41e-04 &  2.46e-15 & 2.37e-04 & 0.32  \\
\end{tabular}
\caption{Splitting of the error indicators $\eta_h$ and $\eta_{KKT}$ for a damped Newton method
for the test problem on the slit domain.\label{KKTerror}}
\end{table}

As last example in this section, we study a problem in which the nonlinearity causes more Newton
steps to test the mesh-adaptive algorithm. Therefore, we introduce a further non-linearity in the state equation
\begin{alignat}{2}
\label{eq.u2}
\nonumber -\sigma \Delta u + u^2 &= f && \quad \text{ in } \Omega,\\
 \partial_n u &= q^2 \pi \sin(\pi x) && \quad \text{ on } \Gamma^{top},\\
\nonumber u&= 0 && \quad \text{ on } \partial\Omega \setminus \Gamma^{top}.
 \end{alignat}
We set the right-hand side to $f(x,y)=2 \pi^2 \sin(\pi x)\sin(\pi y)$ and use
the same objective functional $J(u,q)$ and the quantity of interest $\mathcal{I}(q)$ as above. 
We compare the number of Newton steps and the computational times of the fully adaptive and
the mesh-adaptive algorithm in Table~\ref{tab.u2}. On the finer mesh levels, the mesh-adaptive
algorithm requires 5 Newton steps per mesh level, which means that 6 KKT systems have to be solved,
while the fully adaptive algorithm refines the mesh after 2 Newton steps, i.e.\ 4 KKT systems have to be solved.
Therefore, we observe again a ratio of roughly 2:3 in the computational times.

\begin{table}
\centering
\begin{tabular}{cc|cc|cc} 
 & & \multicolumn{2}{c|}{mesh adaptive} &\multicolumn{2}{c}{fully adaptive} \\
\hline
\#dofs &$\mathcal{I}(q)-\mathcal{I}(q_h)$ & \#steps & time[s] & \#steps & time[s]\\
\hline
54    & 1.3e-01 & 6 & 2  & 3 & 1 \\
102   & 6.1e-02 & 4 & 5  & 1 & 3 \\
176   & 3.4e-02 & 5 & 9  & 2 & 6 \\
358   & 1.8e-02 & 5 & 13 & 2 & 9\\
578   & 9.3e-03 & 5 & 18 & 2 & 13\\
742   & 5.9e-03 & 5 & 28 & 2 & 20\\
1880  & 2.6e-03 & 5 & 39 & 2 & 27\\
2232  & 1.7e-03 & 5 & 65 & 2 & 44\\
6498  & 7.0e-04 & 5 & 98 & 2 & 65\\
8186  & 4.4e-04 & 5 & 138 & 2 & 91\\
10024 & 3.2e-04 & 5 & 221 & 2 & 161\\
\end{tabular}
 \caption{\label{tab.u2} Comparison of Newton steps and computational time for the mesh-adaptive and the fully adaptive algorithms
for the modified, non-linear state equation (\ref{eq.u2}).
}
\end{table}

\subsection{Optimal electrode design}
\label{sec.num_pipette}

In this section we consider the micro-pipette geometry described in Section~\ref{sec.application}. We use the Neumann boundary condition
\begin{align*}
 g(q,y) = \begin{cases}
         J_0(q), \quad &y=y_{\rm tip},\\
         J_k(q), \quad &| y - m_k| <s_k/2 \text{ for } k \in [1,...,s],\\
         0 \quad &\text{else}.
        \end{cases}
\end{align*}
with $J_k(q)$ as described in the appendix \eqref{Jk} and \eqref{Ik}. The optimization problem is given in (\ref{ConcreteOpt}), 
the continuous and discrete KKT system in (\ref{KKTsystem_continuous}) and (\ref{KKTsystem}). First, we study the 
configuration with two openings on each side and keeping the sizes $s_1$ and $s_2$ fixed. The design parameters are thus the vertical
position of the holes, in terms of their midpoints $m_1$ and $m_2$.

\noindent The objective functional is given by
\[
J(u,q)=\frac{1}{2} \int_{\Omega^s} (u-\hat{u})^2 dx + \frac{\alpha}{2} q^2  
\]
where $\hat{u}=5$, $\alpha=10^{-8}$ and a sub-domain $\Omega^s$ as shown in Figure~\ref{simdom}.
The domain $\Omega$ has a size of $40\mu m\times60\mu m$, the sub-domain
$\Omega^s$ of $32\mu m\times 35\mu m$ and the sizes of the 
openings are fixed as $s_0=1.5\mu m, s_1=1\mu m, s_2=2\mu m$. Moreover, the thickness of the micro-pipette wall is $d=0.5\mu m$, the inclination angle of the micro-pipette is
$\theta=22^{\circ}$, the
conductivity $\sigma \approx1.72(\Omega m)^{-1}$
and the applied current on the top of the micro-pipette is $\overline{I}=50\mu A$.
We approximate the function $g$ by the differentiable function $\tilde{g}$ defined in (\ref{expg}) with $\beta=2$. 
As goal functional, we consider again the error in the design parameter $\mathcal{I}(u,q)=q^2$.

In Figure~\ref{fig.pip_fullyadapt} we show the error estimators $\eta, \eta_h$ and $\eta_{KKT}$ as well as the effectivity
index $I_{eff}$ in the course of the fully adaptive algorithm on the left side. 
On each mesh level one Newton step was enough to reduce the 
iteration error $\eta_{KKT}$ below the discretization error $\eta_h$. On the other hand, $\eta_{KKT}$ is around $10^{-3}$ on the coarse mesh levels
and the corresponding Newton residual $\rho_{Newton}$ is far away from being below the tolerance $TOL_{KKT}$. Therefore, the purely mesh-adaptive 
algorithm needs more Newton steps (2-4) on each mesh level, see Table~\ref{tab.pip_comptime}.
Note that although the contribution of the iteration error to the goal functional is very small from the fourth mesh level ($\eta_{KKT}<10^{-9}$)
the Newton residual $\rho^k$ might still be much larger, such that the purely mesh adaptive algorithm
needs at least a second Newton step before $\rho_{Newton}<TOL_{KKT}$.

The effectivity indices are close to 1 on all fine mesh levels.
The error is well estimated on all mesh levels besides the third one with 3670 nodes. Here, the error $\mathcal{I}(q)-\mathcal{I}(q_h)$ changes its sign, 
which is not yet captured by the estimator. On the right, we show the optimal solution on an adaptively refined mesh. Note that most
of the refinement takes place around the two upper openings of the micro-pipette. The optimal positions of the openings found by the adaptive algorithm
for initial values $m_1^0=10\mu m$ and $m_2^0=20\mu m$ are
$m_1=6.1\mu m$ and $m_2=15.8\mu m$ above the tip of the micro-pipette, see also Figure~\ref{pip}.

\begin{figure}[t]
\centering
\small
\begin{minipage}{9cm}
\begin{tabular}{c|ccccc} 
\#dofs &$\mathcal{I}(q)-\mathcal{I}(q_h)$  & $\eta_h$ & $\eta_{KKT}$ & $\mathcal{I}_{eff}$ & $\rho_{Newton}$\\
\hline
2754  & -1.0e+01 & -4.9e+00 & -7.8e-03 & 0.46 & 9.1e-01 \\
3222  & -6.2e+00 & -5.6e+00 & -9.0e-03 & 0.91 & 1.3e+00 \\
3670  &  3.8e-01 & -2.2e-01 &  6.8e-04 & -0.58& 3.8e-02 \\
6426  &  2.7e-01 &  2.9e-01 &  2.0e-10 & 1.06 & 4.6e-05 \\
12506 &  1.1e-01 &  8.0e-02 &  3.7e-10 & 0.70 & 2.9e-05 \\
26506 &  6.1e-02 &  2.9e-02 &  4.3e-12 & 0.47 & 3.8e-06 \\
57068 &  2.7e-02 &  2.0e-02 & -3.2e-14 & 0.74 & 1.0e-06 \\
95894 &  1.3e-02 &  1.1e-02 &  5.9e-13 & 0.88 & 1.9e-07 \\
\end{tabular}
\end{minipage}
\begin{minipage}{6cm}
   \includegraphics[width=\textwidth]{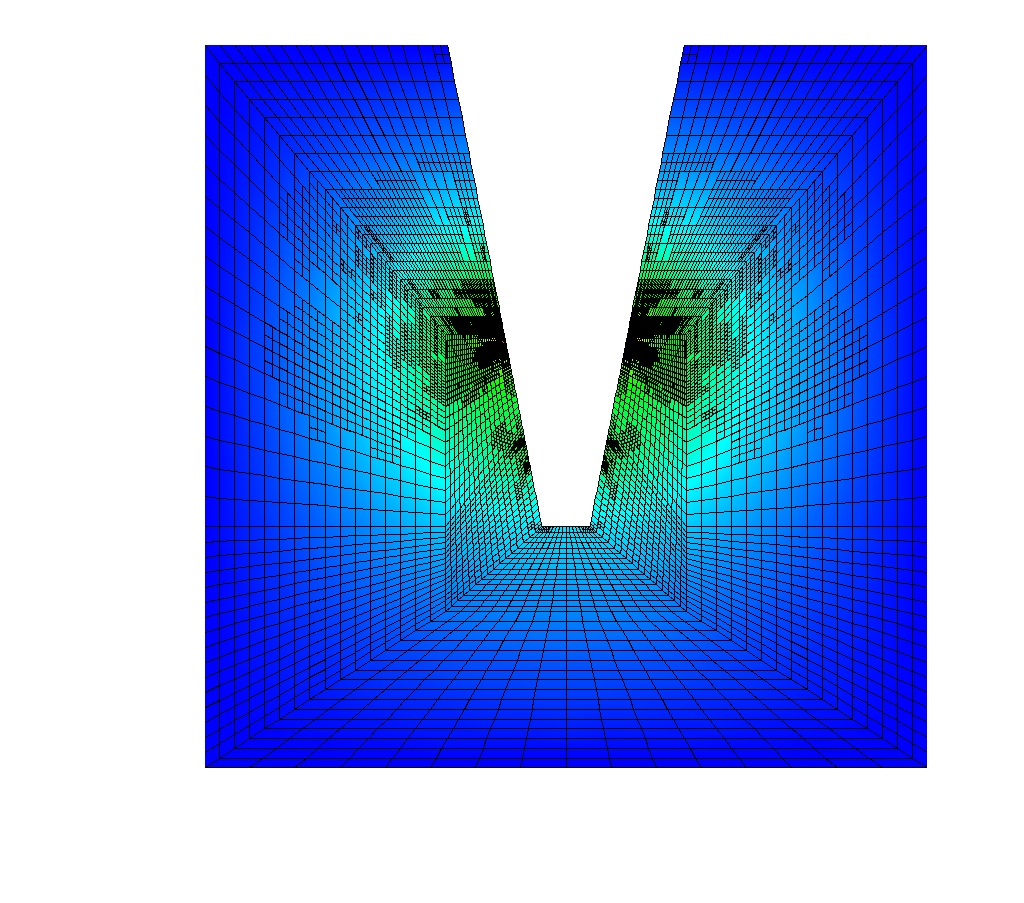}
\end{minipage}
   \caption{\textit{Left:} Error, error estimators and effectivity index for the fully adaptive algorithm applied to
the electrode problem\label{fig.pip_fullyadapt}.
\textit{Right:} Optimal state $u_{\rm opt}$ and finest adaptive mesh.}
\end{figure}

In Table~\ref{tab.pip_comptime}, we compare the mesh adaptive against the fully adaptive algorithm in terms 
of Newton steps and computational times. As mentioned before, the iteration error $\eta_{KKT}$ is reduced below the discretization error
$\eta_h$ already after the first Newton step in the fully adaptive algorithm, while 2 to 4 Newton steps are necessary
in the mesh-adaptive algorithm to reduce
the Newton residual below the tolerance $TOL_{KKT}=10^{-10}$. On the finer meshes we have to solve 2 primal and 1 dual KKT systems for 
the mesh adaptive algorithm and 1 primal and 1 dual system for the fully adaptive algorithm. Thus, the computational times show
again a ratio of roughly 2:3 on the finer meshes.

\begin{table}[t]
\renewcommand{\arraystretch}{1.2}
\centering
\begin{tabular}{cc|cc|cc} 
 & &\multicolumn{2}{c|}{mesh adaptive} &\multicolumn{2}{c}{fully adaptive}\\
 \hline
\#dofs &$\mathcal{I}(q)-\mathcal{I}(q_h)$  & \#steps & time[s] & \#steps & time[s]\\
\hline
2754  & -1.0e+01 &4 &34   &1&17 \\
3222  & -6.2e+00 &4 &80   &1&41 \\
3670  &  3.8e-01 &3 &128  &1&70 \\
6426  &  2.7e-01 &3 &205  &1&119 \\
12506 &  1.1e-01 &3 &335  &1&197 \\
26506 &  6.1e-02 &2 &520  &1&338 \\
57068 &  2.7e-02 &2 &831  &1&575 \\
95894 &  1.3e-02 &2 &1313 &1&943 \\
\end{tabular}
\caption{Number of Newton steps and computational times for the mesh-adaptive and the fully adaptive algorithm
for optimal electrode design.\label{tab.pip_comptime}}
\end{table}

On the left side of Figure~\ref{fig.pip_plots}, we compare the global refinement algorithm against the adaptive ones by plotting the
error against degrees of freedom. As the plots of the two adaptive algorithms are indistinguishable, we plot again only the errors 
for the fully adaptive algorithm. The global refinement strategy converges with
a rate slightly smaller than ${\cal O}(N^{-1})= {\cal O}(h^2)$,
while the adaptive algorithms converge significantly faster.
On the right side, we plot the error $|\mathcal{I}(q)-\mathcal{I}(q_h)|$ over the computational time for the two adaptive algorithms.
Due to the observations made above for the number of KKT systems to be solved, the two adaptive algorithms
show again a very similar asymptotic behavior in terms of computational times, with an advantage of roughly $33\%$ for the fully adaptive algorithm.

\begin{figure}[t]
\centering
 \includegraphics[width=0.45\textwidth]{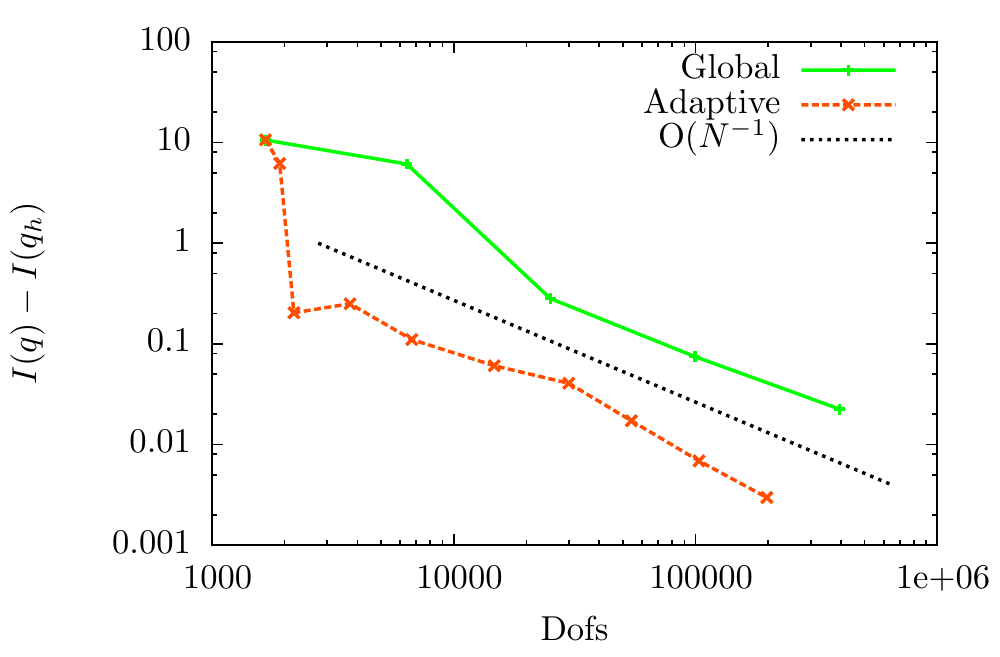}
 \includegraphics[width=0.45\textwidth]{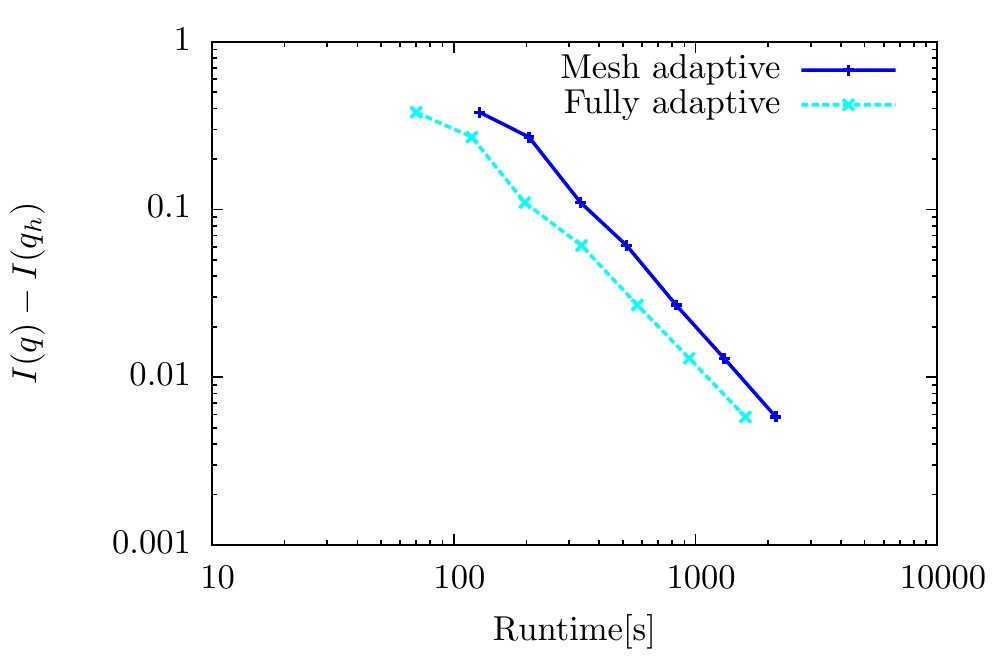}
 \caption{Left: Error plotted over degrees of freedom, for comparison we plot the reference rate ${\cal O}(N^{-1})$, where N denotes the
 number of degrees of freedom. Right: Error over computational time for the electrode problem
 with the global refinement, mesh adaptive and fully adaptive algorithm.\label{fig.pip_plots}}
 \end{figure}
  
Finally, we want to compare the effect of a different number of openings. In the case of only one opening at the bottom, nothing is to be optimized.
Solving the state equation to obtain the electric field yields an objective value of $J(u)\approx 11360$. The voltage distribution is shown in Figure~\ref{fig.pip_comp} 
on the left. The black contour line corresponds to the threshold $\overline{u}=4$.

For a fair comparison of micro-pipettes with one and two sets of openings, we have to optimize not only the position $m_k$, but also the size $s_k$ of the openings. Due to 
the complicated structure of the boundary fluxes $g_k(q)$ (see the appendix), we decided not to implement the additional derivatives that would be necessary
for a simultaneous optimization of size and position.
Instead, we alternately optimize size and position by keeping the respective other parameters fixed, see Table~\ref{tab.SimultaneousOpt}
for the case of two openings.

The optimal parameters the optimization algorithm has found were sizes of $s_1=0.35\mu m$, $s_2=0.3\mu m$ and positions of $m_1=5.9\mu m$ and $m_2=19.7\mu m$
in the case of two openings and $s_1=0.31\mu m$ and $m_1=18.2\mu m$ for one opening.
The optimal functional value is given by $J(u,q)\approx 6217$ for one 
set of holes and $J(u,q)\approx 6018$ for two sets. While we obtain a reduction of more than $45\%$ between the case without a hole and the case with one set of holes, the reduction
between one and two sets of holes is only around $3.2\%$. This can also be seen from the voltage distribution in Figure~\ref{fig.pip_comp}.

These results show that the modified micro-pipettes yield a significantly larger region where cell membranes are made permeable. Moreover, the results indicate that more than one set
of holes does not bring a significant advantage anymore, as a relatively large region around the micro-pipette is activated already by one hole per side. The second
hole that is placed relatively close
to the tip of the micro-pipette by the optimization algorithm, seems to have a much smaller effect. 
However, the overall voltage distribution is more uniform and the peak potential regions at 
the holes (red) are reduced, which may have an advantageous effect for the health of cells 
(without losing any/much of the overall electroporation volume).

\begin{table}
\renewcommand{\arraystretch}{1.2}
\centering
\begin{tabular}{c|cc|cc|c} 
Step & $m_1$ & $m_2$ & $s_1$ & $s_2$ & $J(u,q,s)$\\
\hline
0 & 10.0 & 20.0 & 0.41 & 0.30 & 6214 \\
1a & 4.8 & 19.7 & " & " & 6021 \\
1b & " & " & 0.39 & 0.30 & 6020 \\
2a & 5.1 & 19.7 & " & " & 6020 \\
2b & " & " & 0.38 & 0.30 & 6019 \\
\vdots &\vdots&\vdots&\vdots&\vdots&\vdots\\
\hline
OPT & 5.9 & 19.7 & 0.35 & 0.30 & 6018 \\
\end{tabular}
\caption{Simultaneous optimization of size and position of openings\label{tab.SimultaneousOpt}.
Alternately, the position of the holes are optimized in step 'a' and the size of the holes in step 'b',
while keeping the other respective parameters fixed.}

\end{table}

\begin{figure}[t]
\centering
\subfigure{
 \includegraphics[width=4cm]{opt_0.png}
}
\subfigure{
 \includegraphics[width=4cm]{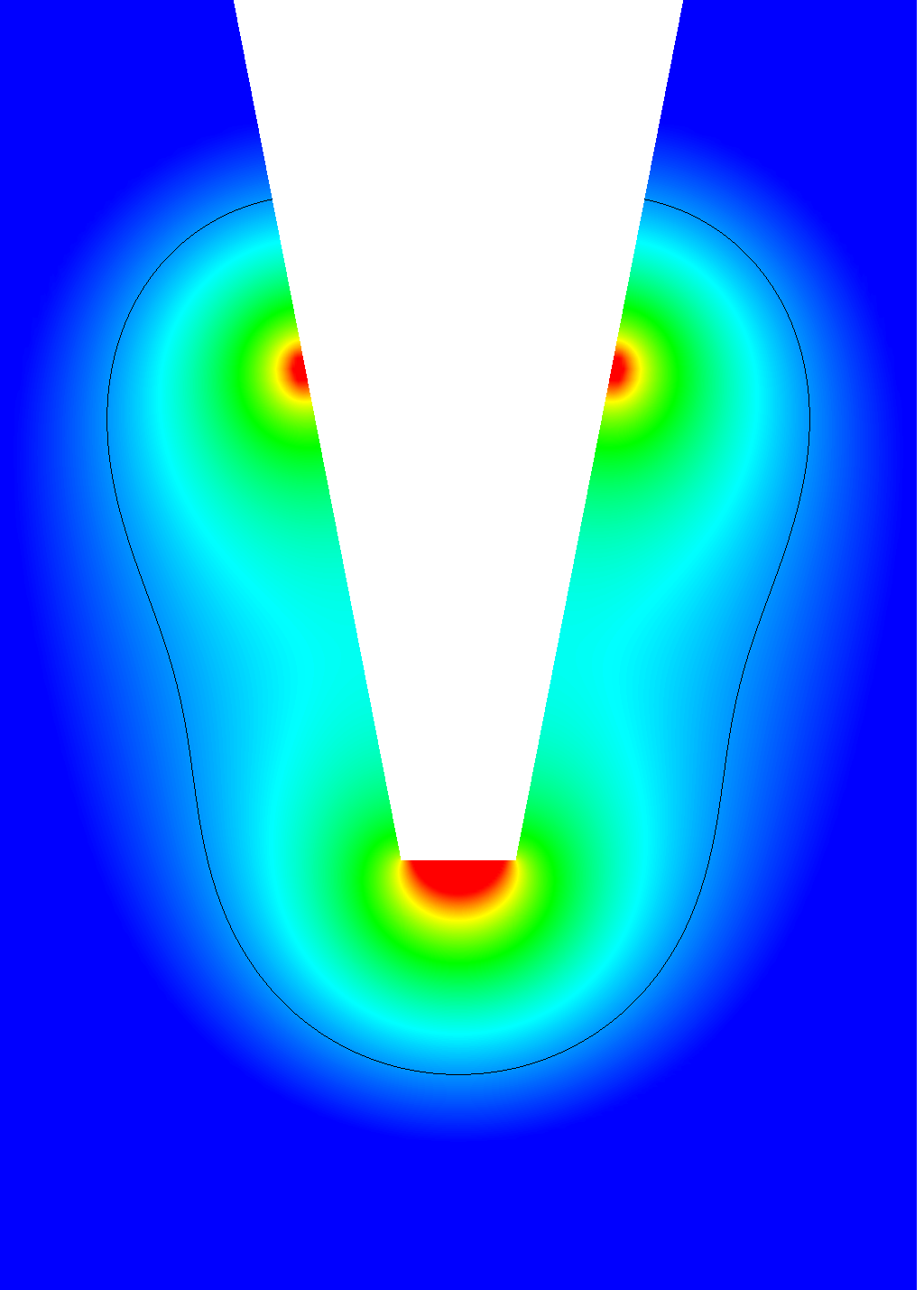}
}
\subfigure{
 \includegraphics[width=4cm]{opt_2.png}
 }
 \caption{Optimal results for the electrode problem with respect to radius and sizes of the openings for 0, 1 and 2 sets of openings.
 The coloring represents the voltage, the black contour line is the threshold voltage $u=\overline{u}$.\label{fig.pip_comp}}

\end{figure}

\section{Conclusion \& Outlook}
\label{sec.conclusion}

We have presented an adaptive optimization algorithm for the optimal design of a micro-pipette for electroporation used for neuronal networks tracing.
The main contribution is the derivation of the goal-oriented strategy that allows to steer the number of Newton steps balancing the discretization error and the solution error with respect to a (nonlinear) functional that represents a quantity of interest for the specific optimization problem. 

Possible extension of this approach is the balance of the linearization error and the error due to the linear solver as in \cite{RannacherVihharev:2013}. Furthermore, more sophisticated optimization algorithms as the one presented in \cite{Ulbrich:2011} can be considered in this framework to increase the robustness of the optimization method.
In addition, globalization techniques as the one presented in \cite{Wihler:2015} that allow to control the convergence behavior of the Newton method become essential in certain practical cases and should be considered in future works.

Another possible extension is to include control and/or state constraints in the formulation. This is possible, as already mentioned, without significant changes in the approach presented here.
Furthermore, the algorithm presented can be used and adapted to many other applications for which a Newton-type method can be applied.

Using the adaptive algorithm we have shown that quantitative improvement of the micro-pipette design can be obtained by a model-based optimization method. 
This approach is a promising tool with a significant impact in the neurosciences.

\section{Appendix}

\begin{figure}[t]
 \centering
 \input{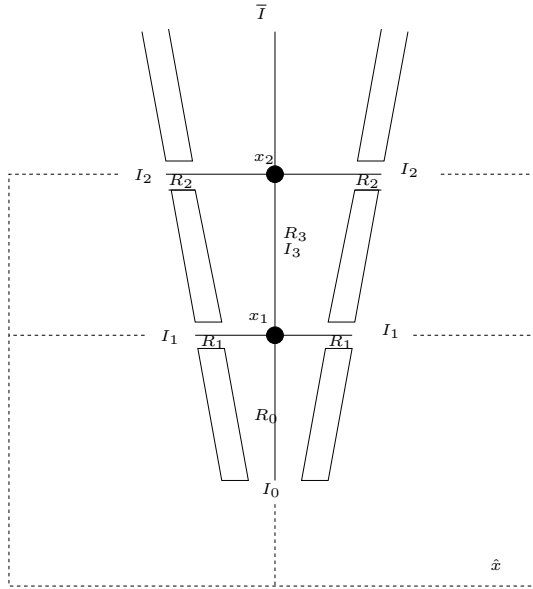}
 \caption{ \label{stromkreis} Scheme of the electric circuit around the micro-pipette}
\end{figure}

In the appendix, we derive the flux function $g$ and its dependency on sizes $s_k$ and positions $m_k$ of
the $k$-th hole, $k=0...s/2$. A scheme of the electric circuit is shown in Figure~\ref{stromkreis}. 
For simplicity, we present only the case of a micro-pipette with two holes on each side.
The derivation of corresponding formulas for a different number of holes is analogous.

We assume that a fixed current $\overline{I}$ is applied at
the top of the micro-pipette. We calculate the current $I_k$ that flows out of the micro-pipette at the holes $k=0,\dots,2$.
The flux function $g_k$ at hole $k$ is then given by the current density $J_k$
\begin{align*}
 g_k= J_k = \frac{I_k}{|\Gamma_k|} \quad \text{ on } \Gamma_k.
\end{align*}
The micro-pipette is filled with a conducting liquid. We assign a specific resistance $R_j$, $j=0, \dots, 3$, to each of the parts 
of the micro-pipette. The resistances of the conducting liquid in the small holes on the left and right in between the isolating wall are 
denoted by $R_1$ and $R_2$, the resistances of the parts in the interior of the micro-pipette by 
$R_3$ and $R_0$. Denoting the thickness of the wall by $d$, the resistance of a hole 
is given by
\begin{align*}
 R_k = \rho \frac{d}{\pi s_k^2} \quad (k=1,2),
\end{align*}
where $\rho= 1/\sigma$ is the electrical resistivity. To calculate the resistance of 
the conical part below $x_2$, we introduce the notation $a(x)$ for the area inside the micro-pipette
at position $x$, see Figure~\ref{cone}. The area $a_0=a(0)$ of $\Gamma_0$ at the tip of the micro-pipette is given by $a(0)=\pi s_0^2$, the area
$a_1=a(m_1)$ of $\Gamma_1$ at the first hole by 
\begin{align*}
 a(m_1)=\pi (s_0 + \tan(\theta)m_1)^2, 
\end{align*}
where $\theta$ is the inclination angle of the micro-pipette.
The resistance of the conical part below $x_2$ is then given by (see e.g.~\cite{Griffiths:2005})
\begin{align*}
 R_0 = \rho \int_0^{m_1} \frac{1}{a(x)^2} dx = \frac{\rho}{\pi} \rm{cot} (\theta) \left( \frac{1}{s_0}  -\frac{1}{s_0 + m_1 \tan(\theta)}\right).
\end{align*}

\begin{figure}
 \centering
 \input{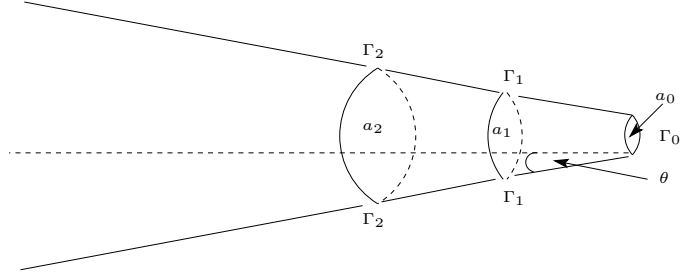}
 \caption{ \label{cone} Scheme of the tip region of the micro-pipette including area elements to calculate the resistances.}
\end{figure}

\noindent Similarly, we get for the resistance $R_3$ of the part between the points $x_1$ and $x_2$
\begin{align*}
 R_3 = \frac{\rho}{\pi} \rm{cot} (\theta) \left(\left(s_0 + m_1 {\rm tan}(\theta)\right)^{-1} - \left(s_0 + m_2 {\rm tan}(\theta)\right)^{-1}\right). 
\end{align*}

The voltage difference between point $x_2$ and a point $\hat{x}$ far off the micro-pipette can be used to derive
the following formula by using Ohm's law (see Figure~\ref{stromkreis})
\begin{align}
I_2 \cdot R_2 = I_3 \cdot R_{0,1,3} \quad  (= u(x_2) - u(\hat{x}) ) \label{URI1}
\end{align}
Here, $R_{0,1,3}$ stands for the total resistance of the parts $R_0$, $R_1$ and $R_3$ which is given by
\begin{align*}
 R_{0,1,3} = R_3 + (R_0^{-1} + 2 R_1^{-1})^{-1}.
\end{align*}
Furthermore, by Kirchhoff's current law the current $\overline{I}$ splits at point $x_2$ to 
\begin{align}
 \overline{I} = I_3 + 2 I_2. \label{split1}
\end{align}
(\ref{URI1}) and (\ref{split1}) can be solved for the two unknowns $I_2$ and $I_3$.
In the same way, it holds at point $x_1$
\begin{align}
 R_0 \cdot I_0 = R_1 \cdot I_1 \label{URI2}
\end{align}
and
\begin{align}
 I_3 = 2 I_1 + I_0. \label{split2}
\end{align}
Given $I_3$, (\ref{URI2}) and (\ref{split2}) define $I_0$ and $I_1$. Inserting the formulas for the resistances, a direct calculation results in
\begin{align}
 I_0 &= \overline{I} \frac{R_1 R_2}{\left(R_2 + 2 R_{0,1,3} \right) \left( R_1 + 2 R_0\right) } = 
        \overline{I} \frac{\left( s_0\,c + m_1 \right)^2 \left(s_0\,c + m_2 \right) d^2 \, s_0}{T(q,s)},\nonumber \\
 I_1 &= \overline{I} \frac{R_0 R_2}{\left(R_2 + 2 R_{0,1,3} \right) \left( R_1 + 2 R_0\right) } =
        \overline{I} \frac{\left( s_0\,c + m_1 \right) \left(s_0\,c + m_2 \right) d \, c \, m_1 \, s_1^2}{T(q,s)},\label{Ik}\\
 I_2 &= \overline{I} \frac{R_{0,1,3}}{R_2 + 2 R_{0,1,3}} 
     = \overline{I} \frac{\left( m_2 d s_0^2 c^2 + 2 c m_2 d s_0 m_1+ 2 m_2 s_1^2 c^2 m_1-2 s_1^2 c^2 m_1^2+d m_1^2 m_2 \right)\,c \, s_2^2}{T(q,s)} 
\nonumber
\end{align}
with $c:={cot} (\theta)$ and
\begin{align*}
T(q,s)&={{\it s_0}}^{4} c^{3}{d}^{2}+2\,{{\it s_0}}^{3} c^{2}{d}^{2}{\it m_1}+2\,d{{\it s_0}}
^{2} c^{3}{\it m_1}\,{{\it s_1
}}^{2}+{{\it s_0}}^{3} c ^{2}{
\it m_2}\,{d}^{2}+2\,{{\it s_0}}^{2}c {\it m_2}
\,{d}^{2}{\it m_1}\\
&+2\,d{\it s_0}\,c^{2}{\it m_2}\,{\it m_1}\,{{\it s_1}}^{2}+
{{\it m_1}}^{2}{{\it s_0
}}^{2}c {d}^{2}+2\,d{{\it m_1}}^{2}{\it s_0}\,
 c ^{2}{{\it s_1}}^{2}+{{\it 
m_1}}^{2}{\it m_2}\,{d}^{2}{\it s_0}\\
&+2\,d{{\it m_1}}^{2}{\it m_2}\, c{{\it s_1}}^{2}+2\, c^{3}{{\it s_2}}^{2}{\it m_2}\,d{{\it s_0}}^{2}+4\,
 c ^{2}{{\it s_2}}^{2}{\it m_2}
\,d{\it s_0}\,{\it m_1}+4\, c ^
{3}{{\it s_2}}^{2}{\it m_2}\,{\it m_1}\,{{\it s_1}}^{2}\\
&-4\, c^{3}{{\it s_2}}^{2}{{\it m_1}}^{2}{{\it 
s_1}}^{2}+2\,c {{\it m_1}}^{2}d{{\it s_2}}^{2}{
\it m_2} .\\ 
\end{align*}

As we use a two dimensional setting the size of $\Gamma_k$ is given by $|\Gamma_k|=s_k$.
The flux function $g_k$ at hole $k$ is thus given by
\begin{align}
 g_k= J_k = \frac{I_k}{s_k} \quad \text{ on } \Gamma_k. \label{Jk}
\end{align}

\section*{Acknowledgements}
\noindent We are grateful to Prof. Andreas T. Schaefer for generous collaborative support and fruitful discussions of this work.\\
T.C.\ was supported by the Deutsche Forschungsgemeinschaft (DFG) through the project  CA 633/2-1.

\bibliographystyle{abbrv}
\bibliography{arbeit}

\end{document}